\providecommand{\U}[1]{\protect\rule{.1in}{.1in}}
\newtheorem{theorem}{Theorem}[section]
\newtheorem*{acknowledgement*}{Acknowledgement}
\newtheorem{corollary}[theorem]{Corollary}
\newtheorem{lemma}[theorem]{Lemma}
\newtheorem{problem}[theorem]{Problem}
\newtheorem{remark}[theorem]{Remark}
\newcommand{\diver}{\mathrm{div}}
\newcommand{\disp}{\displaystyle}
\newcommand{\Ricc}{\mathrm{Ric}}
\newcommand{\loc}{\mathrm{loc}}
\newcommand{\eps}{\varepsilon}
\newcommand{\R}{\mathbb{R}}
\newcommand{\Sec}{\mathrm{Sec}}
\newcommand{\HH}{\mathbb{H}}
\newcommand{\vol}{\mathrm{vol}}
\newcommand{\PP}{\mathscr{P}}
\newcommand{\di}{\mathrm{d}}
\newcommand{\dou}{\mathscr{D}}
\newcommand{\Po}{\mathscr{P}}
\newcommand{\lip}{\mathrm{Lip}}
\newcommand{\RCD}{\mathsf{RCD}}
\newcommand{\me}{\mathsf{m}}
\newcommand{\NN}{\mathbb{N}}
\newcommand{\vm}{\mathbb{V}}
\newcommand{\CCW}{C_{c}^{\infty}\hookrightarrow W^{k,p}}
\title[Density and non-density of $\protect\CCW$]{Density and non-density of $C^\infty_c \hookrightarrow W^{k,p}$ on complete  manifolds with curvature bounds}
\author[Shouhei Honda]{Shouhei Honda}
\address[Shouhei Honda]{Mathematical Institute, Tohoku University, Aoba Aramaki, Aoba Ward, Sendai, Miyagi 980-0845, Japan}
\email{shouhei.honda.e4@tohoku.ac.jp}
\author[Luciano Mari]{Luciano Mari}
\address[Luciano Mari]{Dipartimento di Matematica "Giuseppe Peano", Universit\`a degli Studi di Torino, Via Carlo Alberto 10, I-10123 Torino, Italy}
\email{luciano.mari@unito.it}
\author[Michele Rimoldi]{Michele Rimoldi}
\address[Michele Rimoldi]{Dipartimento di Scienze Matematiche ''Giuseppe Luigi Lagrange", Politecnico di Torino, Corso Duca degli Abruzzi, 24, I-10129 Torino, Italy}
\email{michele.rimoldi@polito.it}
\author {Giona Veronelli}
\address[Giona Veronelli]{Dipartimento di Matematica e Applicazioni, Universit\`a di Milano-Bicocca, via R. Cozzi 53, I-20125 Milano, Italy}
\email{giona.veronelli@unimib.it}
\begin{document}
\begin{abstract}
We investigate the density of compactly supported smooth functions in the Sobolev space $W^{k,p}$ on complete Riemannian manifolds. In the first part of the paper, we extend to the full range $p\in [1,2]$ the most general results known in the Hilbertian case. In particular, we obtain the density under a quadratic Ricci lower bound (when $k=2$) or a suitably controlled growth of the derivatives of the Riemann curvature tensor only up to order $k-3$ (when $k>2$). To this end, we prove a gradient regularity lemma that might be of independent interest. In the second part of the paper, for every $n \ge 2$ and  $p>2$ we construct a complete $n$-dimensional manifold with sectional curvature bounded from below by a negative constant, for which the density property in $W^{k,p}$ does not hold for any $k \ge 2$. We also deduce the existence of a counterexample to the validity of the Calder\'on-Zygmund inequality for $p>2$ when $\Sec \ge 0$, and in the compact setting we show the impossibility to build a Calder\'on-Zygmund theory for $p>2$ with constants only depending on a bound on the diameter and a lower bound on the sectional curvature. 
\end{abstract}

\subjclass[2020]{46E35, 53C21}
\keywords{Sobolev space, density, curvature, singular point, Sampson formula, Alexandrov space, RCD space}

\maketitle

\tableofcontents

\section{Introduction}
In the last decades there was a lot of effort put into a better understanding of Sobolev spaces on non-compact Riemannian manifolds. On the one hand, in the Euclidean spaces one has different equivalent definitions of Sobolev spaces. Once these definitions are transposed on a Riemannian manifold, one would like to know if they remain  equivalent or not (see the introduction of \cite{V-counter} for a brief survey on this topic). On the other hand, it is useful to know which of the nice properties enjoyed by Sobolev spaces on $\R^n$ still hold in the setting of non-compact manifolds. 

Consider a complete, $n$-dimensional Riemannian manifold without boundary $(M, g)$. Let $W^{k,p}(M)$ be the Sobolev space of functions on $M$ all of whose covariant derivatives of order $j$ (in the distributional sense) are tensor fields with finite $L^{p}$-norm, for $0\leq j\leq k$. This turns out to be a Banach space, once endowed with the natural norm
\[
\left\|u\right\|_{W^{k,p}(M)}\dot =\sum_{j=0}^{k}\left(\int_{M}|\nabla^{j}u|^{p}\right)^{\frac{1}{p}}.
\]

By a generalised Meyers-Serrin-type theorem (see e.g. \cite{GGP-AccFinn}),  the set $C^\infty(M)\cap W^{k,p}(M)$ is dense in $W^{k,p}(M)$. This actually holds without assuming completeness of $M$. However, it is not a-priori obvious whether the smaller subset $C^\infty_c(M)$ of compactly supported functions is still dense. Having defined the space $W_{0}^{k,p}(M)\subseteq W^{k,p}(M)$ as the closure of $C^\infty_c(M)$ with respect to the norm $\left\|\cdot\right\|_{W^{k,p}(M)}$, our paper gives a contribution to the following problem:

\begin{problem}\label{Pb}
Let $k\geq 0$ be an integer and let $p\in[1,\infty)$. Under which assumptions on $(M,g)$, $k$ and $p$ is it true that
\begin{equation}\label{eq_dens}
W_{0}^{k,p}(M)=W^{k,p}(M)\,?
\end{equation} 
\end{problem}

\vspace{0.2cm}
\noindent\textbf{Notation.} Hereafter, we fix a function $\lambda:[0,\infty]\to (0,\infty)$ for which there exists a constant $K\in\ \mathbb{N}$ such that
\begin{equation}\label{lambda}
	\lambda(t)\dot=t\prod_{j=1}^{K}\ln^{[j]}(t),\qquad \text{for }t\gg1,
\end{equation}
where $\ln^{[j]}$ stands for the $j$-th iterated logarithm (e.g. $\ln^{[2]}(t)=\ln\ln t$, etc.).
Hereafter, all manifolds considered will have no boundary. Moreover, given a Riemannian manifold $(M,g)$, we denote with $r(x)$ the Riemannian distance from a fixed origin $o\in M$ and by $B_R(x)$ the geodesic ball of radius $R$ centered at a point $x\in M$. Also, given real-valued functions $f_1$ and $f_2$, we write $f_1\lesssim f_2$ to mean that there exists a constant $C>0$ such that $f_1 \le C f_2$. With an abuse of notation, we agree that given a tensor $T$ the symbol $\left\|T\right\|_{L^{p}(\Omega)}$ will denote the $L^{p}$ norm of the function $|T|$ on $\Omega$; for instance, we will write $\left\|\nabla f\right\|_{L^{p}(\Omega)}$ instead of $\left\||\nabla f|\right\|_{L^{p}(\Omega)}$.

\vspace{0.2cm}

Problem \ref{Pb} has a long history. It is a standard fact that, without assuming completeness, $W_{0}^{0,p}(M)=W^{0,p}(M)=L^p(M)$, and with a little effort one can also prove that $W_{0}^{1,p}(M)=W^{1,p}(M)$ for all $p\in[1,\infty)$ on any complete manifold, \cite{aubin-bull}. Also, it is obvious that $W_{0}^{k,p}(M)=W^{k,p}(M)$ for all $k\geq 0$ and $p\in[1,\infty)$ whenever $M$ is compact (see for instance \cite{HebeyCourant}).
Concerning the non-trivial case $k\geq 2$, several partial positive results have been proved: a non-exhaustive list of contributions include works by T. Aubin \cite{aubin-bull}, J. Eichhorn \cite{Eichhorn2,Eichhorn1}, E. Hebey \cite{Hebey,HebeyCourant}, L. Bandara \cite{Bandara}, B. G\"uneysu \cite{Guneysu-Book}, B. G\"uneysu and S. Pigola \cite{GuneysuPigola}, and D. Impera, M. Rimoldi, and G. Veronelli \cite{IRV-HessCutOff,IRV-HO}. To the best of our knowledge, the most general and up-to-date result is the following theorem from \cite{IRV-HO}, which generalizes previously known achievements and goes far beyond the case of constant bounds on the curvature and the specific second order case ($k=2$).

\begin{theorem}[see Theorem 1.5  and Theorem 1.7 in \cite{IRV-HO}]\label{t: IRV}
	Let $(M,g)$ be a complete Riemannian manifold, and define $\lambda$ as in \eqref{lambda}. Then, 
\begin{itemize}
	\item[(i)] 
  $W^{k,p}(M)=W^{k,p}_0(M)$ for all $p\in[1,\infty)$ and $k\ge 2$, if 	\[
	\ |\nabla^j\mathrm{Ric}|(x)\lesssim \lambda(r(x))
	^{\frac{2+j}{k-1}},\qquad 0\leq j \leq k-2,\] 
	and either 
\[ \mathrm{inj}(x)\gtrsim \lambda(r(x))^{-\frac1{k-1}},\qquad\text{ or }\qquad 
		|\mathrm{Riem}|(x)\lesssim\lambda(r(x))^{\frac2{k-1}};
		\]
\item[(ii)] $W^{2,2}(M)=W^{2,2}_0(M)$ if 
	\[
	\ \mathrm{Ric}(x)\gtrsim -\lambda(r(x))^2
	\]
	in the sense of quadratic forms, and $W^{k,2}(M)=W^{k,2}_0(M)$ for $k>2$ if 	\[
	\ |\nabla^j\mathrm{Riem}|(x)\lesssim \lambda(r(x))^{\frac{2+j}{k-1}},\qquad 0\leq j \leq k-3.\]
\end{itemize}		
\end{theorem}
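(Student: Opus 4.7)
The plan is to reduce the density problem to the construction of smooth exhaustion cutoffs with controlled higher covariant derivatives. Given $u\in W^{k,p}(M)$, I would produce cutoffs $\chi_R\colon M\to[0,1]$ that equal $1$ on an ever-growing exhaustion of $M$, have compact support, and satisfy uniform pointwise bounds on $|\nabla^j\chi_R|$ for $1\leq j\leq k$. Once such cutoffs are available, expanding $\nabla^k(\chi_R u)$ via Leibniz and applying dominated convergence yields $\chi_R u\to u$ in $W^{k,p}(M)$, while a standard partition-of-unity plus local mollification argument replaces each $\chi_R u$ by an element of $C_c^\infty(M)$. So the whole theorem boils down to producing the cutoffs.

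To build $\chi_R$ I would take a smooth approximation $\rho\in C^\infty(M)$ of the Riemannian distance $r$ and set $\chi_R=\eta(\rho/f(R))$ for a fixed profile $\eta\in C_c^\infty(\R)$ and a scaling $f(R)\to\infty$ driven by $\lambda$. By Fa\`a di Bruno, $\nabla^j\chi_R$ is a polynomial in $\nabla^i\rho$ for $i\le j$, so matters reduce to producing $\rho$ with pointwise estimates $|\nabla^j\rho|(x)\lesssim \lambda(r(x))^{\alpha_j}$ for suitable exponents $\alpha_j$; balancing the Leibniz expansion of $\nabla^k(\chi_R u)$ so that $|\nabla^j\chi_R|\,|\nabla^{k-j}u|$ is absorbed by the $W^{k,p}$ norm of $u$ as $R\to\infty$ suggests $\alpha_j=(j-1)/(k-1)$, which matches exactly the exponents in the hypotheses of (i) and (ii). In part (i) I would obtain such a $\rho$ by a local-in-time heat regularisation of $r$ on balls whose size is dictated by the injectivity-radius hypothesis (or by $|\mathrm{Riem}_g|$ via the Cheeger--Gromov convergence theorem), using the bounds on $\nabla^j\mathrm{Ric}_g$ for $j\le k-2$ to commute covariant derivatives with the Laplacian enough times to propagate regularity across the iteration.

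Part (ii) exploits the Hilbertian structure to trade pointwise control for integration by parts. Iterating a Sampson-type identity which, applied to $v=\chi_R u$, schematically reads
\begin{equation*}
\int_M |\nabla^{j+1}v|^2 \,=\, -\int_M \langle \nabla^{j}v,\nabla^{j}\Delta v\rangle + \text{(curvature terms)},
\end{equation*}
one gains two orders of differentiation on the test function at the cost of one extra factor of the curvature tensor at each recursion. This is why (ii) needs $\mathrm{Riem}_g$ and its derivatives only up to order $k-3$: two of the $k$ derivatives are always absorbed by integration by parts. The case $k=2$ degenerates to the plain Bochner identity, in which only a lower bound on $\mathrm{Ric}_g$ enters, accounting for the weaker quadratic-only assumption there.

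The hardest point, in my view, is to construct $\rho$ with simultaneous control on $|\nabla^j\rho|$ for every $j\le k$: on a general complete manifold $r$ is only Lipschitz past the cut locus and naive mollification loses a derivative at each step. This is where the ``gradient regularity lemma'' announced in the abstract presumably intervenes, upgrading a distributional covariant derivative of $\rho$ to a pointwise-a.e.\ statement that can be fed back into the iteration. A secondary subtlety is to calibrate the logarithmic factors: the product $\prod_{j=1}^K \ln^{[j]}$ in $\lambda$ is tailored so that $1/\lambda$ is borderline non-integrable at infinity, which is precisely the regime the cutoff argument can still absorb before the derivatives $\nabla^j\chi_R$ stop going to zero in the required norms.
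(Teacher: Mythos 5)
Your broad plan---build exhaustion cutoffs, expand $\nabla^k(\chi_R u)$ by Leibniz, pass to the limit, mollify---is the standard scaffold and it matches what the paper (and \cite{IRV-HO}) does. For part (i) your sketch is essentially on track: one regularizes the distance function (under an injectivity-radius lower bound or a bound on $|\mathrm{Riem}_g|$, exactly so that harmonic-coordinate or heat-flow smoothing is available), obtains pointwise control on $\nabla^j\rho$ for $j\le k$, and then Fa\`a di Bruno yields pointwise bounds on all derivatives of the cutoff.

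For part (ii), however, your opening assumption that one needs ``uniform pointwise bounds on $|\nabla^j\chi_R|$ for $1\le j\le k$'' is precisely what the argument is designed to \emph{avoid}, and missing this misses the mechanism that makes (ii) assume fewer curvature derivatives than (i). The cutoffs used there control $|\nabla^j\chi|$ pointwise only for $j\le k-1$, together with the \emph{trace} quantity $|\Delta\nabla^{k-2}\chi|$; no pointwise estimate on $\nabla^k\chi$ is available. The top-order term $\int |f|^p\,|\nabla^k\chi_R|^2$ is controlled in \emph{integrated} form by applying Sampson's Weitzenb\"ock identity to the symmetrized tensor $h^S=s_{k-1}(\nabla^{k-1}\chi_R)$---not, as you write, to $v=\chi_R u$. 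This happens in a single integration by parts, not via an iteration ``gaining two orders at each step.'' The derivatives $\nabla^j\mathrm{Riem}_g$ for $j\le k-3$ enter through the Ricci-identity commutators that appear when relating $D_S^*h^S$ to $\Delta\nabla^{k-2}\chi_R$, not as an accumulating factor of curvature at each recursion. For $k=2$ there are no such commutators and one is left with the Bochner identity, which only sees $\mathrm{Ric}_g$; this is where the quadratic lower Ricci bound suffices.

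Finally, the ``gradient regularity lemma'' announced in the abstract is unrelated to smoothing the distance function. It is a statement about the \emph{target} function $f$ (that $f,\nabla f,\Delta f\in L^p$ forces $|f|^{p/2}\in W^{1,2}$, with a sharp estimate), it concerns only the range $1<p\le 2$, and it is a novelty of the present paper used to extend (ii) to $p\in[1,2)$; it plays no role in the cited Theorem~\ref{t: IRV}, whose part (ii) is purely Hilbertian ($p=2$). In that case the term $\int \langle\nabla(|f|^2),\nabla|\nabla\chi_R|^2\rangle$ arising from the Bochner/Sampson integration by parts is handled directly by Young's inequality, with no need for the lemma.
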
 
Observe that, for $k>2$, the assumptions in (i) and (ii) are skew. A noticeable feature of (ii) is that it requires a control on derivatives only up to the order $k-3$: for instance, 
\[
W^{3,2}(M)=W_0^{3,2}(M)\qquad \text{provided that}\quad |\mathrm{Riem}|(x)\lesssim \lambda(r(x)),
\] 
and, in particular, if $M$ has bounded sectional curvature (equivalently, bounded curvature operator). Quite surprisingly, for a long-time it remained unknown whether  $W_0^{k,p}(M)=W^{k,p}(M)$ on any complete Riemannian manifold or whether any assumption on $(M,g)$ was necessary in order to deduce the result. Very recently, an example has been found proving that
$W_0^{k,p}(M)\subseteq W^{k,p}(M)$ is a proper inclusion on certain manifolds with a very wild geometry, at least for $p\geq 2$; \cite{V-counter}. 
\medskip

In the first part of the present paper, we study the validity of the results in (ii) of Theorem \ref{t: IRV} in the range $p \in [1,2)$. For second order Sobolev spaces, we prove

\begin{theorem}\label{thm 2nd order}
	Let $(M,g)$ be a complete Riemannian manifold such that
	\begin{equation}\label{ipo_ricci_2nd}
	\Ricc(x)\gtrsim -\lambda(r(x))^2,
	\end{equation}
	in the sense of quadratic forms. Then, for all $p\in [1,2]$, we have 
	\[
	W^{2,p}_0(M)=W^{2,p}(M).
	\]	
\end{theorem}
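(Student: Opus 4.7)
The plan is to approximate an arbitrary $u\in W^{2,p}(M)$ by compactly supported functions $v_k:=\eta_k u$ obtained by multiplication with cutoffs $\eta_k$ having small gradient and small Laplacian (no pointwise Hessian control being available from a Ricci-only bound), and to upgrade $W^{1,p}$-convergence to full $W^{2,p}$-convergence via a global Calder\'on--Zygmund-type inequality that is specifically valid in the range $p\in[1,2]$ under \eqref{ipo_ricci_2nd}.

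\medskip

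First, exploiting the barrier Laplacian comparison $\Delta r\lesssim \lambda(r)$ implied by \eqref{ipo_ricci_2nd}, I would construct $\eta_k=\phi_k\circ r\in C_c^\infty(M)$ with $0\le \eta_k\le 1$, $\eta_k\nearrow 1$ pointwise on $M$, $\|\nabla \eta_k\|_\infty\to 0$ and $\|\Delta \eta_k\|_\infty\to 0$; the iterated-logarithm structure of $\lambda$ is precisely what is needed to find such profiles $\phi_k$ (see the constructions in \cite{IRV-HessCutOff} and \cite{Guneysu-Book}). Setting $v_k:=\eta_k u$, the identities
\[
\nabla v_k-\nabla u=(\eta_k-1)\nabla u+u\nabla \eta_k,\qquad \Delta v_k-\Delta u=(\eta_k-1)\Delta u+2\langle \nabla \eta_k,\nabla u\rangle+u\Delta \eta_k
\]
combined with dominated convergence yield $v_k\to u$ in $W^{1,p}$ and $\Delta v_k\to \Delta u$ in $L^p$. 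Each $v_k$ is compactly supported, so a further local Friedrichs mollification produces a $C_c^\infty$ approximation once we know $v_k\to u$ in $W^{2,p}$; the whole problem then reduces to showing that $\{\nabla^2 v_k\}$ is Cauchy in $L^p(M)$.

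\medskip

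The central tool is a global Calder\'on--Zygmund inequality of the form
\[
\|\nabla^2 w\|_{L^p(M)}\lesssim \|\Delta w\|_{L^p(M)}+\|\lambda(r)\nabla w\|_{L^p(M)}+\|w\|_{L^p(M)},\qquad \forall\,w\in C_c^\infty(M),
\]
derived from the Bochner identity
\[
\tfrac{1}{2}\Delta|\nabla w|^2=|\nabla^2 w|^2+\langle \nabla \Delta w,\nabla w\rangle+\Ricc(\nabla w,\nabla w)
\]
after multiplication by a regularization of $|\nabla w|^{p-2}$ and integration against \eqref{ipo_ricci_2nd}; it is precisely the sign condition $p-2\le 0$ that turns the one-sided Ricci bound into a useful contribution, and thereby forces the restriction $p\le 2$ (for $p=1$ one passes to the limit from $p>1$, or uses a weak-type endpoint version). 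Applying this inequality to $w=v_k-v_j$ reduces the Cauchy property of $\nabla^2 v_k$ in $L^p$ to the Cauchy property of $\Delta v_k$ and of $v_k$ in $L^p$ (already established) and of $\lambda(r)\nabla v_k$ in $L^p$.

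\medskip

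The main obstacle is this last weighted piece: the hypothesis $u\in W^{2,p}(M)$ does not grant a priori integrability of $\nabla u$ against the weight $\lambda(r)$, and without it the Cauchy bound on $\lambda(r)\nabla v_k$ collapses. In my view, this is the technical heart of the argument and is exactly what the ``gradient regularity lemma'' announced in the abstract must address. I would expect a statement of the flavour $\|\lambda(r)\nabla u\|_{L^p(M)}\lesssim \|u\|_{W^{2,p}(M)}$, proved by pairing the distributional equation $\Delta u=\mathrm{tr}\,\nabla^2 u$ with test functions of the form $\lambda(r)^{2p-2}u\,\chi_k$, where $\chi_k$ are auxiliary Laplacian-type cutoffs, integrating by parts, and using the slow growth of $\lambda$ and of its derivatives (granted by the iterated-logarithm structure) to reabsorb error terms. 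Once this lemma is in place, all three pieces on the right-hand side of the Calder\'on--Zygmund inequality are controlled and the proof closes.
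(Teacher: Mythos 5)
Your overall scaffolding (Laplacian cut-offs, H\"older to reduce from $L^p$ to an $L^2$-weighted quantity, Bochner to control the problematic Hessian term, a gradient regularity lemma to close the loop) matches the paper's skeleton, but the central lemma you invoke is false and your Bochner step is applied to the wrong function, and these two problems feed each other.

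The proposed regularity lemma $\|\lambda(r)\nabla u\|_{L^p(M)}\lesssim \|u\|_{W^{2,p}(M)}$ cannot hold on any non-compact manifold. On $\R^n$, take a fixed bump $\phi\in C^\infty_c(B_1)$ and set $u_m(x)=\phi(x-x_m)$ with $|x_m|\to\infty$: then $\|u_m\|_{W^{2,p}}$ is constant in $m$ while $\|\lambda(r)\nabla u_m\|_{L^p}\approx\lambda(|x_m|)\to\infty$. No integration by parts against Laplacian cut-offs can rescue a scale-breaking inequality of this kind. The actual regularity lemma in the paper is of an entirely different nature: it is unweighted, and states that $f,\nabla f,\Delta f\in L^p$ implies $|f|^{p/2}\in W^{1,2}(M)$ with $\|\nabla|f|^{p/2}\|_{L^2}^2\le\frac{p^2}{4(p-1)}\|f\|_{L^p}^{p-1}\|\Delta f\|_{L^p}$ (plus the Coulhon--Duong-type estimate $\|\nabla f\|_{L^p}^2\le\frac{1}{p-1}\|f\|_{L^p}\|\Delta f\|_{L^p}$ for $1<p\le 2$). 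This is proved directly by integrating $\Delta(f^2+\eps)^{p/2}$ against a linear cut-off, with no weight and no curvature hypothesis.

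The second problem is structural: you want to run the Bochner/Calder\'on--Zygmund inequality on $w=v_k-v_j=(\eta_k-\eta_j)u$. The Ricci term then reads $\Ricc(\nabla w,\nabla w)\gtrsim -\lambda^2|\nabla w|^2$, and by Leibniz $|\nabla w|^2$ contains the uncontrolled piece $|\eta_k-\eta_j|^2|\nabla u|^2$; the weight $\lambda^2$ on $\nabla u$ is exactly what cannot be absorbed, which is the (false) lemma you were forced to conjecture. The paper avoids this entirely by never applying Bochner to $\chi_R f$: it expands $\nabla^2(\chi_R f-f)$ by Leibniz and isolates the only non-trivial term $\|f\,\nabla^2\chi_R\|_{L^p}$; by H\"older ($p\le 2$) this is controlled by $\int_M |f|^p|\nabla^2\chi_R|^2$, and Bochner is applied to $u=\chi_R$ multiplied by $|f|^p$. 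In the resulting Ricci term $|f|^p\Ricc(\nabla\chi_R,\nabla\chi_R)$, the product $\lambda^2\cdot|\nabla\chi_R|^2\lesssim\lambda^2\lambda^{-2}=O(1)$ cancels thanks to the gradient decay $|\nabla\chi_R|\lesssim\lambda^{-1}(r)$, so no weighted estimate on $\nabla f$ is ever needed. The remaining integration-by-parts term $\int\langle\nabla|f|^p,\nabla|\nabla\chi_R|^2\rangle$ is then handled by Kato and Young, and this is precisely where $\nabla|f|^{p/2}\in L^2$ enters. (For $p=1$ the paper does an ad hoc regularization $(f^2+\eps)^{1/2}$ rather than invoking the lemma.) So while your diagnosis of ``the weighted gradient piece is the technical heart'' is accurate, the correct resolution is to choose the function to which Bochner is applied so that the weight cancels, not to control a weighted gradient norm that is simply not finite in general.

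A smaller issue: you ask for $\|\Delta\eta_k\|_\infty\to 0$. The paper's cut-offs only satisfy $|\Delta\chi_R|\le C$ uniformly (plus support in $M\setminus B_R$), and that already suffices because every estimate is localised outside $B_R$ and multiplied by quantities that vanish in $L^p$ there. Asking for uniform smallness of $\Delta\eta_k$ is compatible with $\int\lambda^{-1}=\infty$ but is an unnecessary strengthening.
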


\begin{remark}\label{lambdagen}
	\emph{This result still holds, with the same proof,  for slightly more general (yet more involved) choices for the function $\lambda$; for more details see \cite[Theorem 1.7 and Section 5]{IRV-HO}.}
\end{remark}

\begin{remark}\label{rmkRates}
\emph{The curvature of the example in \cite{V-counter} decays to $-\infty$ as $-r(x)^4$, and it seems difficult to refine the construction to make it decay at rate $-r(x)^{\alpha}$ for $\alpha$ close enough to $2$. Therefore, at present, there is a gap between the curvature decays in Theorem \ref{thm 2nd order} and in \cite{V-counter}. We anticipate that it would be interesting to produce a counterexample to Theorem \ref{thm 2nd order} in the range $p \in [1, 2]$ when \eqref{ipo_ricci_2nd} barely fails. A counterexample for $p>2$ will be given below.
}
\end{remark}

The proof in \cite{Bandara,IRV-HessCutOff} for the case $p=2$  breaks up into the following steps:
\begin{enumerate}
	\item in our assumptions, by \cite[Corollary 2.3]{BianchiSetti} there exists a family of Laplacian cut-off functions $\chi_R\in C^\infty_c(M)$   such that 
	\begin{itemize}
		\item $\chi_R= 1$ on $B_R(o)$,
		\item $|\nabla \chi_R|(x)\leq C\lambda^{-1}(r(x))$,
		\item $|\Delta \chi_R|\leq C$,
	\end{itemize} 
	for some constant $C>0$ independent of $R$;
	\item the above properties guarantee that $\|f\Delta\chi_R\|_{L^2(M)}\to 0$ as $R\to\infty$, for any $f\in W^{2,2}(M)$;
	\item using the Bochner formula, the latter step implies that $\|f\,\nabla^2\chi_R\|_{L^2(M)}\to 0$ as $R\to\infty$ for any $f\in W^{2,2}(M)$.
	\item this finally yields that $f\chi_R\to f$ in $\|\cdot\|_{W^{2,2}(M)}$.	
\end{enumerate}
The proof we provide here for the case $p<2$ follows the same line of thought. In this case one has to control $\|f\,\nabla^2\chi_R\|_{L^p(M)}$. Since the Bochner formula is modelled on $L^2$-norms, by a H\"older inequality we estimate $\|f\,\nabla^2\chi_R\|_{L^p(M)}$ in terms of $\||f|^{p/2}\,\nabla^2\chi_R\|_{L^2(M)}$ and use Bochner formula to control this latter. The main difficulty consists in estimating the remaining term involving $|f|^{p/2}$ and its derivatives. To this end, the case $p=1$ requires an \textsl{ad hoc} procedure, while for $p \in (1,2)$ we shall need a regularity lemma that, to the best of our knowledge, seems to be new. To state this latter, we first define the functional space
\[
\widetilde{W}^{2,p}(M) = \Big\{ f \in L^p(M) \ : \ \Delta f \in L^p(M) \ \text{distributionally}	\Big\}, 
\]
endowed with the norm 
\[
\|f\|_{\widetilde{W}^{2,p}(M)} = \|f\|_{L^p(M)} + \|\Delta f\|_{L^p(M)}.
\]	
It is important to notice that O. Milatovic, see \cite[Appendix A]{GuneysuPigola_AMPA}, proved that  $C^\infty_c(M)$ is dense in $\widetilde{W}^{2,p}(M)$ on any complete manifold $M$, independently of the behaviour of its curvatures.

\begin{lemma}\label{lem_reg}
	Let $M$ be a complete Riemannian manifold, and fix $p \in (1,\infty)$. If
	$f, |\nabla f|, \Delta f  \in L^{p}(M)$, then $|f|^{\frac{p}{2}} \in W^{1,2}(M)$,
	with the bound
	\begin{equation*}
		\ \left\|\nabla |f|^{\frac{p}{2}}\right\|_{L^{2}(M)}^2 \leq \frac{p^{2}}{4(p-1)}\| f \|_{L^p(M)}^{p-1} \|\Delta f\|_{L^p(M)}.
	\end{equation*}
	Moreover, when $1<p\leq 2$, if
	$f\in \widetilde{W}^{2,p}(M)$ then $|\nabla f|\in L^{p}(M)$ and 
	\begin{align}\label{eq_align_intro}
		\|\nabla f\|_{L^p(M)}^2 \le & \frac{4}{p^2} \|f\|_{L^p(M)}^{2-p} \left\| \nabla |f|^{\frac{p}{2}} \right\|_{L^2(M)}^2  \\ 
	\le & \frac{1}{p-1} \|f\|_{L^p(M)}\|\Delta f\|_{L^p(M)}  \qquad \forall \, f \in \widetilde{W}^{2,p}(M). \nonumber 
\end{align}	
\end{lemma}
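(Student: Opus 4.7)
The formal identity underlying Part 1 is the integration by parts
\[
(p-1)\int_M |f|^{p-2}|\nabla f|^2 \;=\; -\int_M |f|^{p-2}f\,\Delta f,
\]
which, via H\"older's inequality and the chain rule $|\nabla|f|^{p/2}|^2=\tfrac{p^2}{4}|f|^{p-2}|\nabla f|^2$, formally yields the stated estimate. The work is to rigorize this identity (in spite of the non-smoothness of $t\mapsto t|t|^{p-2}$ at the origin when $p<2$) and then to strengthen Part 1 to the weaker hypotheses of Part 2.

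For Part 1 I would assume $p\in(1,2]$ (the case $p>2$ admits the same scheme with $\psi(t)=t|t|^{p-2}$ used directly, since $\psi(f)\in L^{p'}(M)$) and introduce the smooth regularizations
\[
\psi_\varepsilon(t):=t(t^2+\varepsilon^2)^{(p-2)/2}, \qquad v_\varepsilon(t):=(t^2+\varepsilon^2)^{p/4},\qquad \varepsilon>0.
\]
A direct pointwise check gives $\psi_\varepsilon'(t)=(t^2+\varepsilon^2)^{(p-4)/2}[(p-1)t^2+\varepsilon^2]\ge 0$, the comparison $|v_\varepsilon'(t)|^2\le \tfrac{p^2}{4(p-1)}\psi_\varepsilon'(t)$ (which is just the bound $t^2\le \tfrac{1}{p-1}[(p-1)t^2+\varepsilon^2]$), and the uniform estimate $|\psi_\varepsilon(t)|\le |t|^{p-1}$ (valid since $(t^2+\varepsilon^2)^{(p-2)/2}\le |t|^{p-2}$ for $p\le 2$). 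Pick a smooth cutoff $\chi_R\in C^\infty_c(M)$ with $\chi_R\equiv 1$ on $B_R(o)$, $\mathrm{supp}\,\chi_R\subset B_{2R}(o)$, and $|\nabla\chi_R|\le C/R$, available on any complete manifold. After first checking the calculation for $f$ smooth and then extending by local mollification combined with Fatou's lemma on the left-hand side, one tests the distributional relation against $\phi=\chi_R^2\psi_\varepsilon(f)$ and obtains
\[
\int_M \chi_R^2\psi_\varepsilon'(f)|\nabla f|^2 \;=\; -\int_M \chi_R^2\psi_\varepsilon(f)\,\Delta f \;-\; 2\int_M \chi_R\psi_\varepsilon(f)\nabla\chi_R\cdot\nabla f.
\]
H\"older together with $|\psi_\varepsilon(f)|\le |f|^{p-1}$ bounds the right-hand side by $\|f\|_{L^p}^{p-1}\|\Delta f\|_{L^p}+\tfrac{2C}{R}\|f\|_{L^p}^{p-1}\|\nabla f\|_{L^p(B_{2R}\setminus B_R)}$, and the cutoff error vanishes as $R\to\infty$ because $\nabla f\in L^p(M)$. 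Using the comparison $|v_\varepsilon'|^2\le \tfrac{p^2}{4(p-1)}\psi_\varepsilon'$ and sending first $R\to\infty$, then $\varepsilon\to 0$ via Fatou (with the a.e. convergence $\nabla v_\varepsilon(f)\to \nabla |f|^{p/2}$ guaranteed by Stampacchia's theorem, which kills $\nabla f$ a.e. on $\{f=0\}$), closes Part 1.

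For Part 2 the first inequality of \eqref{eq_align_intro} is purely algebraic: starting from $|\nabla f|^p=(2/p)^p|f|^{p(2-p)/2}|\nabla |f|^{p/2}|^p$ (a.e. on $\{f\neq 0\}$) and applying H\"older with conjugate exponents $2/p$ and $2/(2-p)$ (the latter finite precisely because $p<2$) produces $\|\nabla f\|_{L^p}^2\le \tfrac{4}{p^2}\|f\|_{L^p}^{2-p}\|\nabla |f|^{p/2}\|_{L^2}^2$. The delicate point is to produce $\nabla f\in L^p$ from the sole assumption $f\in \widetilde{W}^{2,p}(M)$, since Part 1 presupposes $\nabla f\in L^p$. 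For this, I would invoke Milatovic's density of $C^\infty_c(M)$ in $\widetilde{W}^{2,p}(M)$ recalled just before the lemma, to pick $f_n\in C^\infty_c(M)$ with $f_n\to f$ and $\Delta f_n\to\Delta f$ in $L^p$. For every $f_n$, Part 1 combined with the H\"older estimate just established gives $\|\nabla f_n\|_{L^p}^2\le \tfrac{1}{p-1}\|f_n\|_{L^p}\|\Delta f_n\|_{L^p}$, hence $\{\nabla f_n\}$ is bounded in the reflexive space $L^p(M)$ ($p>1$). Extracting a weakly convergent subsequence and identifying its limit as $\nabla f$ in the distributional sense forces $\nabla f\in L^p(M)$, and weak lower semicontinuity transmits the bound to $f$. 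Reapplying Part 1 and the H\"older inequality to $f$ itself furnishes the middle term of \eqref{eq_align_intro}.

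The main technical obstacle is to keep control of the test function as $\varepsilon\to 0$: the uniform domination $|\psi_\varepsilon(f)|\le |f|^{p-1}\in L^{p'}(M)$, which is available only when $p\le 2$, is what allows interchange of the limits $R\to\infty$ and $\varepsilon\to 0$ and identifies the limit $\nabla v_\varepsilon(f)\to \nabla |f|^{p/2}$ unambiguously at $\{f=0\}$ via Stampacchia. The conceptual novelty in Part 2 is that the apparent circularity of Part 1's hypothesis is broken by Milatovic's density theorem, turning a regularity statement (``$\Delta f\in L^p$ implies $\nabla f\in L^p$'') into a consequence of a non-trivial self-adjointness result.
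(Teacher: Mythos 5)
Your proof follows essentially the same approach as the paper's: both regularize $|f|^{p/2}$ by a smooth surrogate of the form $(f^2+\varepsilon)^{p/4}$, integrate by parts against a linear cutoff, exploit the identical algebraic inequality bounding $|v_\varepsilon'|^2$ by $\tfrac{p^2}{4(p-1)}\psi_\varepsilon'$, invoke Stampacchia's theorem on $\{f=0\}$, and rely on Milatovic's density of $C^\infty_c(M)$ in $\widetilde W^{2,p}(M)$ to break the circularity in Part 2. The differences are purely cosmetic: you test $\Delta f$ directly against $\chi_R^2\psi_\varepsilon(f)$ while the paper computes $\Delta(f^2+\varepsilon)^{p/2}$ and pairs it with a cutoff (dual manoeuvres after one integration by parts), you pass to the limit $\varepsilon\to0$ via Fatou and a.e.\ convergence where the paper uses $W^{1,2}$-boundedness plus weak compactness, and in Part 2 you extract a weak limit in reflexive $L^p$ where the paper applies the estimate to differences $f_j-f_i$ to show $\{\nabla f_j\}$ is $L^p$-Cauchy.
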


 Notably, this lemma in the case $1<p\le 2$ refines the $L^p$-gradient estimate found by T. Coulhon and X. T. Duong, \cite{CD}, who showed that, for some constant $C_p$,
\begin{equation}\label{eq_CD_intro}
\|\nabla f\|_{L^p(M)}^2\leq C_p \| f\|_{L^p(M)}\|\Delta f\|_{L^p(M)},\quad\forall f\in C^\infty_c(M).
\end{equation}
At the same time, our proof avoids the use of tools from Harmonic Analysis, such as the mapping properties of the Littlewood-Paley's function. A further, different proof of \eqref{eq_CD_intro}, obtained from $L^p$-gradient estimates for the heat kernel, can be found in \cite{LiChen}. 

\begin{remark}\emph{
As pointed out in \cite[p.7]{CD}, a minor modification of the argument in \cite[Sec. 5]{CD_TAMS} shows the existence of manifolds (for instance, the connected sum of two copies of $\R^n$) for which \eqref{eq_CD_intro} fails for $p>n$. Indeed, these examples can be generalized to any $2<p\le n$ reasoning precisely as for Corollary \ref{cor_gionaludo} below. Note that, by Young's inequality, \eqref{eq_align_intro} implies the weaker $L^p$-gradient estimate
	\begin{equation}\label{CDadditive}
\|\nabla f\|_{L^p(M)} \le c \big( \|f\|_{L^p(M)} + \|\Delta f\|_{L^p(M)} \big).
\end{equation}
Because of \cite{ChengThalmaierThompson}, \eqref{CDadditive} is met for each $1<p<\infty$ if the Ricci curvature is bounded from below. A direct proof of this latter result can be found in \cite[Theorem 8.2]{Pigola_ArXiv}. Sufficient conditions for the validity of \eqref{eq_CD_intro}, more precisely of the stronger
\[
\|\nabla f\|_{L^p(M)}\leq C_p \|(-\Delta)^{1/2}f\|_{L^p(M)},
\]
have been investigated in \cite{bakry,CD,CD_TAMS,CarCouHas,Car,CCFR}. With no assumptions besides the completeness of $M$, the only $L^p$-gradient estimate that we are aware of is that in Theorem 2 in \cite{GuneysuPigola_AMPA}, where the authors prove the inequality
	\[
	\|\nabla f\|_{L^p(M)}^2 \le C_p \|f\|_{L^p(M)} \Big( \|\Delta f\|_{L^p(M)} + \max\{0, p-2\} \|\nabla^2 f\|_{L^p(M)}\Big),
	\]
for $f\in L^{p}(M)$ with $|\nabla^{2}f|\in L^{p}(M)$.
}\end{remark}

We can reproduce the same scheme of proof introduced for $k=2$ also for higher orders. The main tool will be a Weitzenb\"ock formula due to J. H. Sampson applied to the totally symmetrized $(k-1)$-th covariant derivative of some special higher order cut-off functions; see Section \ref{section-HO} for precise definitions. This point of view has been recently exploited in \cite[Section 5]{IRV-HO} in the case $p=2$, finally leading to the result described in Theorem \ref{t: IRV}(ii). Combining this latter technique with our regularity lemma, we are able to deal with the full range $p\in[1,2]$ and prove the following
\begin{theorem}\label{teo_higher}
Let $(M,g)$ be a complete Riemannian manifold such that, for some integer $k>2$,
\[
\ |\nabla^{j}\mathrm{Riem}|(x)\lesssim \lambda(r(x))^{\frac{2+j}{k-1}},\qquad 0\leq j\leq k-3,
\]
with $\lambda$ as in \eqref{lambda}. Then, for all $p\in[1,2]$, we have
\[
\ W^{k,p}(M)=W_{0}^{k,p}(M).
\]
\end{theorem}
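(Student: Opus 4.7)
My plan is to run the four-step strategy of Theorem \ref{thm 2nd order} at order $k$: replace the Bochner formula by Sampson's Weitzenb\"ock identity applied to the totally symmetric tensor $T_R:=\mathrm{Sym}(\nabla^{k-1}\chi_R)$, and couple it with the regularity Lemma \ref{lem_reg} to pass from $L^2$- to $L^p$-estimates when $p\in(1,2)$. The endpoint $p=1$ will be handled by an \emph{ad hoc} adaptation of the argument for Theorem \ref{thm 2nd order} (the Lemma \ref{lem_reg} route being unavailable), so I focus on $p\in(1,2)$ below.

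First I would invoke the higher-order Laplacian-type cut-offs $\chi_R\in C_c^\infty(M)$ built in \cite[Section~5]{IRV-HO} under the standing hypothesis on $\nabla^j\mathrm{Riem}_g$: these satisfy $0\le\chi_R\le 1$, $\chi_R\equiv 1$ on $B_R(o)$, the pointwise decay $|\nabla^j\chi_R|(x)\lesssim \lambda(r(x))^{-(k-j)/(k-1)}$ for $1\le j\le k-1$, and a uniform $L^\infty$-bound on the Sampson principal-part operator $P\chi_R$ (the natural $k$-th order analog of $|\Delta\chi_R|\le C$ from the case $k=2$). For $f\in W^{k,p}(M)$ with $p\in(1,2]$, the inclusion $f\in \widetilde W^{2,p}(M)$ allows one to apply Lemma \ref{lem_reg} and obtain $h:=|f|^{p/2}\in W^{1,2}(M)$. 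Since $P\chi_R\to 0$ pointwise off $B_R(o)$ with a uniform $L^\infty$ bound, dominated convergence yields $\|f\,P\chi_R\|_{L^p}\to 0$ and $\|h\,P\chi_R\|_{L^2}\to 0$.

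The core step is Sampson's identity: pairing it with $h^2$ and integrating by parts gives, schematically,
\[
\int_M h^2|\nabla^k\chi_R|^2 \;\lesssim\; \int_M h^2|P\chi_R|^2+\int_M h^2\,\mathfrak R(T_R,T_R)+\int_M|\nabla h|^2|\nabla^{k-1}\chi_R|^2+\text{l.o.t.},
\]
where the curvature term $\mathfrak R$ consists of contractions of $\nabla^j\mathrm{Riem}_g$ with $T_R$ for $0\le j\le k-3$. Combining the cut-off properties from Step~1, the curvature hypothesis, and $h\in W^{1,2}$, each right-hand term vanishes as $R\to\infty$. Then H\"older's inequality
\[
\int_M|f|^p|\nabla^k\chi_R|^p\;\le\; \Bigl(\int_M|f|^p|\nabla^k\chi_R|^2\Bigr)^{p/2}\Bigl(\int_M|f|^p\Bigr)^{(2-p)/2}
\]
gives $\bigl\||f|\,|\nabla^k\chi_R|\bigr\|_{L^p}\to 0$, while the intermediate Leibniz terms $|\nabla^j f|\,|\nabla^{k-j}\chi_R|$ ($1\le j\le k-1$) decay in $L^p$ by dominated convergence, using $\nabla^j f\in L^p$ and the pointwise decay of $\nabla^{k-j}\chi_R$. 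Summing the Leibniz expansion one concludes $f\chi_R\to f$ in $W^{k,p}(M)$.

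The main obstacle I foresee is the bookkeeping inside Sampson's identity: one needs to verify that the principal part produced by Sampson's formula on $\mathrm{Sym}(\nabla^{k-1}\chi_R)$ is precisely the operator for which \cite[Section~5]{IRV-HO} provides a uniform $L^\infty$-bound, and that, after the integrations by parts against $h^2$, all curvature contractions involve only $\nabla^j\mathrm{Riem}_g$ with $j\le k-3$ (so that no hypothesis is wasted). This, together with checking that $f\in W^{k,p}$ feeds into the hypotheses of Lemma \ref{lem_reg}, concentrates the technical work of the proof.
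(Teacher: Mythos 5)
Your plan is essentially the paper's proof: use the $k$-th order rough Laplacian cut-offs of \cite{IRV-HO}, apply Sampson's Weitzenb\"ock formula to the symmetrized $(k-1)$-th derivative $h_n^S := s_{k-1}(\nabla^{k-1}\chi_n)$, pair against $|f|^p$, and feed the cross-terms $\nabla|f|^{p/2}$ into Lemma \ref{lem_reg}, with a separate mollification argument at $p=1$. The paper does exactly this.

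One point that your schematic inequality glosses over, and that you should be aware of when filling in the ``bookkeeping'', is that the target quantity $\int_M|f|^p|\nabla^k\chi_n|^2$ is not simply dominated by terms that vanish. Sampson's decomposition $\Delta_{\rm Sym}=D_S^*D_S-D_SD_S^*$ produces, after integrating the $D_S^*D_S$-piece by parts against $|f|^p h_n^S$, the term $\int_M|f|^p|D_Sh_n^S|^2$. Since $D_Sh_n^S=k\,s_k(\nabla^k\chi_n)$, this is $\approx k^2\int_M|f|^p|\nabla^k\chi_n|^2$ up to symmetrization errors controlled by $\nabla^j\mathrm{Riem}$ for $0\le j\le k-3$, whereas the Bochner-type term $\int_M|f|^p|\nabla h_n^S|^2$ is only $\approx \int_M|f|^p|\nabla^k\chi_n|^2$. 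The argument closes because, after absorbing cross-terms by Young's inequality with small $\delta,\eta,\eps$, the net coefficient is $(1-\delta)(k^2-\eps C_{2,k})-(1+\eta)(1+\eps C_{1,k})$, which is positive for $k\ge 2$ because $k^2>1$. Without this factor-$k^2$ gain from $D_S$ the scheme does not close, so this is more than bookkeeping. Finally, your operator ``$P$'' should be identified with $D_S^*$ applied to $h_n^S$: the uniform bound $|D_S^*h_n^S|\le C$ off $B_{R_n}$ is what follows, via the Ricci identities, from the cut-off property $|\Delta\nabla^{k-2}\chi_n|\le C$ together with the curvature decay — and it enters the argument through the estimate of $\mathscr{B}_n=\int\langle D_SD_S^*h_n^S,|f|^ph_n^S\rangle$, not as an a priori $L^\infty$-bound on a scalar ``$P\chi_R$''.
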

\medskip
 
The next step is to understand if one could obtain the equality $W_0^{2,p}(M) = W^{2,p}(M)$ under a lower Ricci curvature bound for $p>2$. In the second part of the paper, we show that the answer is negative even if one assumes a lower bound on the sectional curvature. We obtain

\begin{theorem}\label{th_striking}
For all $n\ge 2$ and $p>2$, there exists a complete, $n$-dimensional Riemannian manifold $Q$ with $\mathrm{Sec}\geq -1$ and satisfying
	\begin{equation}\label{eq_th_striking}
W^{k,p}(Q) \neq W_0^{k,p}(Q) \qquad \text{for each } \, k \ge 2.
	\end{equation}
\end{theorem}

The manifold $Q$ has at least two ends with finite volume, and can be constructed to have finite volume as well as to have infinite volume. Its existence will be a consequence of the following result, where we produce a ``block" that can be attached to any smooth manifold. More precisely, we prove
\begin{theorem}\label{Counterex}
	For all $n\geq 2$ and all $p>n$ there exists a complete $n$-dimensional Riemannian manifold $(M,g)$ with sectional curvature $\Sec \ge -1$, with a  distinguished relatively compact, open subset $\vm$ diffeomorphic to $\mathbb{R}^{n}$, such that the following holds: for each $n$-dimensional Riemannian manifold $(N,\bar g)$, and each relatively compact set $\vm' \subset N$ that is diffeomorphic to $\mathbb{R}^{n}$, the connected sum $M \sharp N$ obtained by gluing along $\vm$ and $\vm'$ and keeping the original metric outside of $\vm, \vm'$ satisfies
\[
W^{k,p}(M \sharp N) \neq W_0^{k,p}(M \sharp N) \qquad \text{for each } \, k \ge 2.
\]
In particular, if $N$ has sectional curvature bounded from below the same holds for $M \sharp N$.
\end{theorem}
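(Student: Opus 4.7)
The plan is to construct $M$ so that the obstruction to $W^{k,p}$-density is wholly contained in $M\setminus\vm$, hence preserved by any connected-sum surgery inside $\vm$. The mechanism must be specific to the range $p>n$, where Morrey's embedding $W^{k,p}\hookrightarrow C^{k-1,1-n/p}_{\loc}$ on balls of bounded geometry controls $f$ and its first $k-1$ derivatives pointwise and opens the door to a capacity-type argument at infinity. Since the identity $W^{1,p}_0 = W^{1,p}$ holds on any complete manifold, any such argument must ultimately obstruct approximation at the level of higher derivatives, which is why the hypothesis $k\geq 2$ is essential.

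Concretely, I would build the end $M\setminus\vm$ as an infinite chain of geometric ``bumps'' placed at well-separated points $p_j$ marching to infinity along a trunk, each bump formed from a small positively curved piece (e.g.\ a spherical cap, with $\Sec\geq 0$) attached to the trunk by a short hyperbolic-type neck ($\Sec=-1$), with smooth junctions preserving the global bound $\Sec_g\ge-1$. The bumps are engineered so that some continuous linear functional $L:W^{k,p}(M)\to\R$---morally, a ``trace at infinity'' built from higher-derivative data along the chain---vanishes on $C^\infty_c$ by compact support, while $L(f)\ne 0$ for an explicitly constructed witness $f\in W^{k,p}(M)$. The existence of such a pair $(f,L)$ would immediately give $f\notin W^{k,p}_0(M)$. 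Since the end of $M$ (and hence both $f$ and $L$) is untouched by the connected sum with any admissible $(N,\vm')$---one simply extends $f$ by $0$ across the surgery region---the same argument transfers verbatim to $M\sharp N$.

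The main obstacle and technical crux is the simultaneous design of the chain geometry, the witness $f$, and the functional $L$: the three must be compatible in that $f$ lies in $W^{k,p}$, $L$ is continuous in the $W^{k,p}$-norm, and nevertheless $L(f)\ne 0$. A naive choice fails here, because a uniform Morrey bound of the form
\[
|\nabla^{k-1}u(p_j)|\ \le\ C\,\|u\|_{W^{k,p}(U_j)}
\]
on disjoint uniform neighborhoods $U_j\supset B_j$ would sum to $\sum_j |\nabla^{k-1}u(p_j)|^p \le C^p \|u\|_{W^{k,p}(M)}^p$, forcing $\nabla^{k-1}u(p_j)\to 0$ along any such sequence on $W^{k,p}$ and hence trivializing $L$. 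Overcoming this requires breaking the scale-uniformity of the $U_j$---for instance by tuning bump sizes to make local Morrey constants grow in a controlled fashion, or by replacing pointwise evaluation with an ``integrated'' trace along the chain---the balance being delicate and compatible with $\Sec\ge-1$ only for $p>n$. Verifying the curvature bound at every bump-to-neck junction (by routine but careful interpolation with spherical caps absorbing the negative curvature of the necks), and the final tuning of the $\ell^p$-versus-Sobolev trade-off defining $L$, is where the substantial technical work will go.
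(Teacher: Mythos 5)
Your proposal is not a proof; it is a sketch that, as you yourself acknowledge, runs into a difficulty it does not resolve. You correctly observe that your naive functional $L$ (summing pointwise $(k-1)$-jet data along uniform neighborhoods) is trivial on $W^{k,p}$, and you then only gesture at possible fixes (``tuning bump sizes,'' ``integrated trace''); this is precisely where the substantive mathematics would have to live, and it is not supplied. The dual-functional reformulation via Hahn-Banach is of course equivalent to the statement to be proved, so reframing the goal as ``find $(f,L)$'' does not by itself gain anything.

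The paper's actual construction is quite different in structure and technique. First, instead of a one-ended chain of bumps, it builds $M$ as a convex hypersurface of $\mathbb{H}^{n+1}$ (a ``hyperbolic American football'') that has \emph{finite volume} and \emph{two ends} $E_\pm$, with $\vm$ in the middle. The witness $F$ is a smooth function equal to $-1$ on one end and $+1$ on the other: compact support of any approximant $F_i$ forces $F - F_i \equiv \pm 1$ near the two ends, so the topology---not a functional---supplies the rigidity. Second, the geometric obstruction comes from a dense set of \emph{sharp conical singularities} (in the sense of Alexandrov/RCD spaces) approximated by smooth ``spike metrics'' $\sigma_{j,k}$ on annular pieces $U_j \setminus U_{j-1}$, not from positively curved caps. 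Third, the technical engine is the De Philippis--N\'u\~nez-Zimbr\'on vanishing theorem for gradients of harmonic functions at sharp singular points of $\RCD$ spaces: after passing to a Gromov--Hausdorff limit, it forces any function with controlled $W^{2,p}$ norm to be locally constant on the limit space, and uniform Morrey estimates (this is where $p>n$ enters) transfer that rigidity back to the approximating smooth metrics. The choice of the number $k(j)$ of spikes in each annulus is made inductively by contradiction, using precisely this compactness and rigidity. None of these three ingredients appears in your proposal; in particular, your single-ended design lacks the topological dichotomy that makes the $\pm 1$ argument go, and no analogue of the harmonic-function vanishing theorem is invoked to supply quantitative rigidity near the spikes.

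One correct instinct you do share with the paper: all the interesting geometry must be kept away from $\vm$, so that the connected sum with $(N,\vm')$ leaves it untouched. The paper realizes this by placing $\vm$ in the middle of a two-ended tube and the singular geometry on both ends marching to infinity.
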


\begin{remark}
\emph{As we shall see, $M$ is topologically a product $\mathbb{S}^{n-1} \times \R$ and has finite volume. 
}	
\end{remark}

If now, given $n\ge 2$ and $p>2$, we select a surface $M \sharp N$ as in Theorem \ref{Counterex} applied in dimension $2$, and a compact boundaryless manifold $Y$ of dimension $n-2$, then it is easy to show that the Riemannian manifold $Q\doteq (M\sharp N)\times Y$ satisfies \eqref{eq_th_striking}; see Subsection \ref{striking}. Moreover, if $N$ is complete and has sectional curvature bounded from below the same holds for $Q$.

Indeed, this  last observation allows to extend to the range $p \in (2,n]$ two other counterexamples that were previously known only in the case $p > n$. In order to introduce them, let us first  note that there is a tight relation between the density of compactly supported functions in $W^{2,p}(M)$ and the validity of  a global $L^p$-Calder\'on-Zygmund inequality
\begin{equation}\label{CZ(p)}\tag{CZ$_p$}
\|\nabla^2 f\|_{L^p(M)}\leq C (\| f\|_{L^p(M)}+\|\Delta f\|_{L^p(M)}),\quad\forall f\in C^\infty_c(M).
\end{equation}
Indeed, as illustrated in \cite[Proposition 4.7]{Pigola_ArXiv}, 
	\begin{equation}\label{CZdensity}
	\text{$M$ supports \eqref{CZ(p)}} \qquad \Longrightarrow \qquad W_0^{2,p}(M)=W^{2,p}(M),
	\end{equation} 
while the converse is not always true; see Theorem A and the subsequent discussion in \cite{MV}. Therefore, counterexamples to the density of $C^\infty_c(M)$ in $W^{2,p}(M)$ have to be searched among those manifolds that do not support \eqref{CZ(p)}. The existence of such manifolds has first been proved in \cite{GuneysuPigola,Li}. However, in these constructions the curvature is not lower bounded. Very recently, the first example of a complete non-compact manifold with non-negative sectional curvature on which \eqref{CZ(p)} fails for $p>n$ has been presented in \cite{MV} by L. Marini and the fourth author. Their example confirms a strong indication suggested by a work by G. De Philippis and J. N\'u\~nez-Zimbron, \cite[Cor. 1.3]{dePNZ}, where it is proved that for $p>n$ it is not possible to construct a Calder\'on-Zygmund theory on compact manifolds with constants depending only on (a diameter upper bound and) a lower sectional curvature bound. The same trick that allows us to deduce Theorem \ref{th_striking} from Theorem \ref{Counterex} also enables us to extend the above mentioned results in \cite{MV,dePNZ} to the full range $p>2$:

\begin{corollary}\label{cor_gionaludo}
For any $p>2$, there exists a complete $n$-dimensional Riemannian manifold $Q$ with $\Sec \ge 0$ that does not satisfy \eqref{CZ(p)}. Precisely, if $M^2$ is a $2$-dimensional manifold as constructed in \cite{MV}, and if $Y^{n-2}$ is a compact manifold with $\Sec \ge 0$, then one can take $Q = M^2 \times Y^{n-2}$.
\end{corollary}

\begin{remark}
\emph{Clearly, by \eqref{CZdensity} the counterexample in Theorem \ref{th_striking} is also a counterexample to \eqref{CZ(p)}. The extra information in Corollary \ref{cor_gionaludo} is the possibility to construct an example with $\Sec \ge 0$, in particular, by the lower volume bound given by Calabi-Yau theorem (see e.g. \cite{Yau}), each end of $Q$ has infinite volume.  
}
\end{remark}

\begin{corollary}\label{cor_dePNZ}
Let $n \in \NN$, $D \ge 2$ and $p>2$. Then there exist sequences of $n$-dimensional complete Riemannian manifolds $(Q_k,g_k)$ with $\mathrm{diam}_{g_k}(Q_k) \le D$, $\Sec_{g_k} \ge 0$ and of smooth functions $f_k \in C^\infty(Q_k)$ such that
	\[
	\|f_k\|_{L^p(Q_k)} + \|\Delta f_k\|_{L^p(Q_k)} = 1
	\]
but
	\[
	\lim_{k \to \infty} \|\nabla^2 f_k\|_{L^p(Q_k)} = \infty.
	\]
\end{corollary}

We now explain the strategy to prove Theorem \ref{Counterex}. Contradicting \eqref{CZ(p)} on $M$ is, in principle, easier than contradicting $W^{2,p}_0(M)=W^{2,p}(M)$. Indeed, for the former, it is enough to prove that, for any given $C>0$, there exists at least one compactly supported function $f$ for which \eqref{CZ(p)} with constant $C$ fails. In particular the construction can be localized in a given region of $M$. On the other hand, to disprove the density of $C^\infty_c(M)$ in $W^{2,p}(M)$ one needs to handle with any possible compactly supported approximation of a given function. A way to overcome this problem has been proposed in \cite{V-counter}, which contains the first (and so far unique) example of $W^{2,p}_0(M)\neq W^{2,p}(M)$ whenever $p\ge 2$. 
  Namely, one can consider a complete manifold $(M,g)$ with two ends $E_+$ and $E_-$ and finite volume, so that it is possible to choose a function $f\in W^{2,p}(M)$ which attains two different constant values (say $1$ and $-1$) on each end. Accordingly, to check that $f$ has no compactly supported approximations, it is enough to prove that the $W^{2,p}$-norm of any function $F$  which is identically $-1$ and $1$ on the two ends cannot be arbitrarily close to zero. In the presence of a constant lower curvature bound, our strategy can roughly  be summarized as follows. First, we construct a suitable Alexandrov space $(M, d_{\infty})$ with finite volume, $\mathrm{Sec}\geq -1$, and a dense set of sharp singular points. We consider an exhaustion $U_{j}$ of $M$. On each annulus $U_{j}\setminus U_{j-1}$, inspired by \cite{dePNZ, MV} we prove the existence of a family of metrics $\{\sigma_{j,k}\}_k$ that GH-converge to $d_{\infty}$, and then we suitably select a function $k : \NN \to \NN$ to produce a global metric $g$ on $M$ that equals $\sigma_{j,k(j)}$ on $U_j \setminus U_{j-1}$ and has the following property: any function $F$ with $\left\|F\right\|_{W^{2,p}(U_{j+1})}\leq  1$ has to be $C^0$-close to a constant on $U_{j}\setminus U_{j-1}$, in a quantitative way. In particular, if $\left\|F\right\|_{W^{2,p}(M)}$ is small enough, then $F$ cannot attain values $-1$, $1$ on the two ends, as required.
\medskip 

We conclude this introduction with a list of related questions for future research.

\begin{itemize}
\item[-] Is it possible to construct a complete manifold with $\mathrm{Sec} \ge 0$ for which $W^{2,p}_0(M) \neq W^{2,p}(M)$ for some $p > 2$? What about if we weaken the curvature assumption to $\mathrm{Ric}\ge 0$? In both of the cases, by Calabi and Yau's theorem all ends have infinite volume, so the construction in Theorem \ref{Counterex} cannot be adapted in a straightforward way. 
\item[-] The manifolds constructed in Theorem \ref{th_striking} and Corollary \ref{cor_gionaludo} are Riemannian products and, in particular, they have nontrivial topology. It would still be interesting to produce counterexamples in the range $p \in (2,n]$ by generalizing, if possible, the technique in \cite{dePNZ}. This may lead, for instance, to counterexamples to \eqref{CZ(p)} for $p \in (2,n]$ on contractible manifolds. 
\item[-] Referring to Remark \ref{rmkRates}, are the decay rates assumed for the curvatures considered in Theorem \ref{thm 2nd order} and Theorem \ref{teo_higher} sharp? It seems reasonable to conjecture so, up to lower order terms.
\item[-] Does a complete manifold with positive injectivity radius and $\mathrm{Ric} \ge - 1$ satisfy $W^{2,p}_0(M) = W^{2,p}(M)$ for each $p \in [1,\infty)$? Recall that Theorem \ref{t: IRV} answers affirmatively under the conditions $|\mathrm{Ric}| \lesssim \lambda^2(r)$ and ${\rm inj}  \gtrsim \lambda(r)^{-1}$. If $|\mathrm{Ric}| \lesssim 1$, is a decay assumption on the injectivity radius necessary?
\end{itemize}


\section{Density when $p \in [1,2]$}

\subsection{The regularity lemma}
	
\begin{lemma}\label{lem_reg}
Let $M$ be a complete Riemannian manifold, and fix $p \in (1,\infty)$. Let $o\in M$ be some fixed origin, and let us denote by $B_{r}$ the geodesic balls of radius $r$ centered at $o$. If $f \in W^{2,p}_\loc(M)$ then $|f|^{\frac p2} \in W^{1,2}_\loc(M)$ and, for each $0 < R < r$,  
	\begin{equation}\label{eq_local_reg}
	\frac{4(p-1)}{p^2} \left\| \nabla |f|^{\frac{p}{2}} \right \|_{L^2(B_{R})}^2 \le \| f \|_{L^p(B_{r})}^{p-1} \left( \frac{1}{r-R} \|\nabla f\|_{L^p(B_{r})} + \|\Delta f\|_{L^p(B_{r})} \right).	
	\end{equation}
In particular, 
	\begin{equation}\label{eq_gradient_reg}
	\begin{array}{lcl}
	f, |\nabla f|, \Delta f  \in L^{p}(M) & \qquad \Longrightarrow \qquad & |f|^{\frac{p}{2}} \in W^{1,2}(M),
	\end{array}
	\end{equation}
with the bound
	\begin{equation}\label{eq_bello}
	\ \left\|\nabla |f|^{\frac{p}{2}}\right\|_{L^{2}(M)}^2 \leq \frac{p^{2}}{4(p-1)}\| f \|_{L^p(M)}^{p-1} \|\Delta f\|_{L^p(M)}.
	\end{equation}
Moreover, if $1<p\leq 2$,
	\begin{equation*}
	\begin{array}{lcl}
	f\in \widetilde{W}^{2,p}(M)  &\qquad \Longrightarrow \qquad & |\nabla f|\in L^{p}(M),\quad|f|^{\frac{p}{2}} \in W^{1,2}(M),
	\end{array}
	\end{equation*}
	and
	\begin{align}\label{CDEstimate}
	\|\nabla f\|_{L^p(M)}^2 \le & \frac{4}{p^2} \|f\|_{L^p(M)}^{2-p} \left\| \nabla |f|^{\frac{p}{2}} \right\|_{L^2(M)}^2  \\ 
	\le & \frac{1}{p-1} \|f\|_{L^p(M)}\|\Delta f\|_{L^p(M)}  \qquad \forall \, f \in \widetilde{W}^{2,p}(M). \nonumber 
	\end{align}
	\end{lemma}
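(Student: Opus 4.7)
The plan is to apply the formal identity
\[
\Delta(|f|^p) = p(p-1)|f|^{p-2}|\nabla f|^2 + p\,|f|^{p-2} f\, \Delta f
\]
in a regularized form tested against a compactly supported cutoff. First I would set $h_\epsilon(t) = (t^2+\epsilon^2)^{p/2} \in C^\infty(\R)$ and compute, via the chain rule for $f \in W^{2,p}_\loc(M)$,
\[
\Delta h_\epsilon(f) = p(f^2+\epsilon^2)^{\frac p2-2}\bigl[(p-1)f^2+\epsilon^2\bigr]|\nabla f|^2 + p(f^2+\epsilon^2)^{\frac p2-1}f\,\Delta f.
\]
Then I would pick a cutoff $\eta \in C^\infty_c(M)$ with $\eta \equiv 1$ on $B_R$, $\mathrm{supp}\,\eta \subset B_r$, $0\le\eta\le 1$ and $|\nabla \eta| \lesssim (r-R)^{-1}$, and test the distributional identity $\int \eta^2 \Delta h_\epsilon(f) = -2\int\eta\nabla\eta\cdot\nabla h_\epsilon(f)$ to obtain, after discarding the non-negative $\epsilon^2|\nabla f|^2$ contribution,
\[
p(p-1)\!\int\!\eta^2(f^2+\epsilon^2)^{\frac p2-2}f^2|\nabla f|^2 \le -2p\!\int\!\eta\nabla\eta\cdot (f^2+\epsilon^2)^{\frac p2-1}f\nabla f - p\!\int\!\eta^2(f^2+\epsilon^2)^{\frac p2-1}f\,\Delta f.
\]

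Next I would bound the right-hand side via H\"older's inequality with exponents $(p,p')$: using $|f|\le (f^2+\epsilon^2)^{1/2}$, each term is controlled in terms of $\|(f^2+\epsilon^2)^{(p-1)/2}\|_{L^{p'}(B_r)}$, which tends to $\|f\|_{L^p(B_r)}^{p-1}$ as $\epsilon \to 0$ by dominated convergence on $\mathrm{supp}\,\eta$. On the left, the key observation is that $\frac{p^2}{4}(f^2+\epsilon^2)^{\frac p2-2}f^2|\nabla f|^2 = |\nabla (f^2+\epsilon^2)^{p/4}|^2$ almost everywhere; hence the uniform (in $\epsilon$) $W^{1,2}_\loc$ bound on $(f^2+\epsilon^2)^{p/4}$, combined with the $L^1_\loc$-convergence $(f^2+\epsilon^2)^{p/4} \to |f|^{p/2}$, forces $|f|^{p/2} \in W^{1,2}_\loc(M)$, and lower semicontinuity of the Dirichlet integral lets me pass to the limit to obtain the local estimate \eqref{eq_local_reg}. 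Taking $r = 2R \to \infty$ then eliminates the cutoff contribution (since $\|\nabla f\|_{L^p(M)} < \infty$), producing the global bound \eqref{eq_bello}.

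For the last claim ($1 < p \leq 2$ with $f \in \widetilde{W}^{2,p}(M)$) the new issue is that $\nabla f \in L^p$ is not assumed. I would invoke Milatovic's density of $C^\infty_c(M)$ in $\widetilde{W}^{2,p}(M)$ cited in the introduction: pick $f_k \in C^\infty_c(M)$ with $f_k \to f$ and $\Delta f_k \to \Delta f$ in $L^p$. For each $f_k$, $\nabla f_k \in L^p$ trivially, so \eqref{eq_bello} applies. To convert from the $L^2$-bound on $\nabla |f_k|^{p/2}$ to an $L^p$-bound on $\nabla f_k$, I would write
\[
|\nabla f_k|^p = \bigl(|f_k|^{p-2}|\nabla f_k|^2\bigr)^{p/2}\,|f_k|^{p(2-p)/2}
\]
and apply H\"older with exponents $2/p$ and $2/(2-p)$ (both $\ge 1$ precisely because $p \le 2$); combining with \eqref{eq_bello} gives the composite estimate in \eqref{CDEstimate} for $f_k$. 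Applying this composite estimate to $f_k-f_\ell$ shows $\{\nabla f_k\}$ is Cauchy in $L^p(M)$, so it converges to a limit which must equal $\nabla f$ distributionally; hence $\nabla f \in L^p(M)$ and the inequalities pass to the limit.

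The main obstacle I anticipate is the justification of the identity $|\nabla |f|^{p/2}|^2 = \frac{p^2}{4}|f|^{p-2}|\nabla f|^2$ almost everywhere, which is subtle in the range $p\in(1,2)$ where $|f|^{p-2}$ blows up on $\{f=0\}$. The resolution comes from Stampacchia's theorem ($\nabla f = 0$ a.e.\ on $\{f=0\}$), under the convention that the right-hand side vanishes on this set; this is precisely what the $\epsilon$-regularization $(f^2+\epsilon^2)^{p/4}$ forces, since the factor $f^2$ in $|\nabla(f^2+\epsilon^2)^{p/4}|^2$ kills the would-be singularity before the limit is taken. A secondary subtlety is the precise tracking of constants: the factor $\frac{4(p-1)}{p^2}$ comes out cleanly from the regularization, whereas matching the coefficient $\frac{1}{r-R}$ in the local estimate requires fixing the constant in $|\nabla\eta|\le C/(r-R)$, or one application of Young's inequality to absorb a stray numerical factor.
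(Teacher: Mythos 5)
Your proposal follows essentially the same route as the paper's proof: $\eps$-regularization of $|f|^p$ via $(f^2+\eps^2)^{p/2}$, testing against a cutoff and integrating by parts, H\"older, discarding the favorable $\eps$-contribution to isolate the term $\frac{4}{p^2}|\nabla (f^2+\eps^2)^{p/4}|^2$, passing to the $\eps\to 0$ limit by uniform boundedness and weak lower semicontinuity, Stampacchia's theorem to give meaning to $|f|^{p-2}|\nabla f|^2$ on $\{f=0\}$, Milatovic's density of $C^\infty_c(M)$ in $\widetilde W^{2,p}(M)$, and H\"older with exponents $2/p$, $2/(2-p)$ to convert the $L^2$-bound on $\nabla|f|^{p/2}$ into the $L^p$-bound on $\nabla f$. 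The use of $\eta^2$ rather than a linear cutoff is, as you note, a cosmetic difference affecting only the numerical constant.

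There is, however, a genuine gap at the very first step. You invoke the chain rule $\Delta h_\eps(f)=h_\eps'(f)\Delta f+h_\eps''(f)|\nabla f|^2$ directly for $f\in W^{2,p}_\loc(M)$. For this to hold distributionally one needs $h_\eps''(f)|\nabla f|^2\in L^1_\loc$, hence essentially $|\nabla f|^2\in L^1_\loc$ (note $h_\eps''(0)=p\,\eps^{p-2}>0$ and $h_\eps''$ is bounded for $p\le 2$). But $f\in W^{2,p}_\loc$ only gives $\nabla f\in W^{1,p}_\loc\hookrightarrow L^{np/(n-p)}_\loc$ (when $p<n$), and $L^{np/(n-p)}_\loc\subset L^2_\loc$ requires $p\ge 2n/(n+2)$. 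For $n\ge 3$ and $p$ close to $1$ this fails, so neither the chain-rule identity nor the corresponding integration by parts $\int\eta^2h_\eps'(f)\Delta f=-\int\nabla(\eta^2h_\eps'(f))\cdot\nabla f$ (whose right-hand side involves a product of two $L^p_\loc$ fields, possibly not in $L^1$) is a priori justified. The paper avoids this by first proving \eqref{eq_local_reg} for $f\in C^\infty(M)$, where all quantities are pointwise finite, and then upgrading to $f\in W^{2,p}_\loc(M)$ by Meyers--Serrin smooth approximation \cite{GGP-AccFinn} together with the a priori estimate applied to the approximating sequence. You should insert this two-stage scheme (smooth $f$ first, then approximation) ahead of the $\eps$-limit; once that is done the rest of your argument matches the paper's.
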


\begin{remark}\label{rem_CZ}
\emph{In view of the validity of a local Calder\'on-Zygmund inequality, we note that the assumption $f\in W^{2,p}_{\mathrm{loc}}(M)$ is equivalent to the assumption $f, |\nabla f|,  \Delta f \in L^{p}_{\mathrm{loc}}(M)$.}
\end{remark}

\begin{proof}
Let us first assume that $f \in C^\infty(M)$. Clearly, $|f|^{\frac p2} \in L^2_\loc(M)$. Let $\varphi$ be a linear cut-off function with ${\rm supp}\, \varphi \subset \overline{B_{r}}$, $\varphi \equiv 1$ on $B_R$ and $|\nabla \varphi| \le 1/(r-R)$. For $\eps>0$, we compute
	\begin{align*}
	\disp - \int_M \langle \nabla(f^2 + \eps)^\frac{p}{2}, \nabla \varphi \rangle=& \disp \int_M \varphi \Delta (f^2 + \eps)^\frac{p}{2} \\
	=& \disp p \int_M \varphi (f^2 + \eps)^\frac{p-2}{2}[ f \Delta f + |\nabla f|^2] + p(p-2)\int_M \varphi (f^2 + \eps)^\frac{p-4}{2}f^2 |\nabla f|^2 \\
	= & \disp p \int_M \varphi (f^2 + \eps)^\frac{p-2}{2} f \Delta f + p \int_M \varphi (f^2 + \eps)^\frac{p-2}{2}|\nabla f|^2 \\
	&+ p(p-2)\int_M \varphi (f^2 + \eps)^\frac{p-4}{2}f^2 |\nabla f|^2 \\
	\ge & \disp p \int_M \varphi (f^2 + \eps)^\frac{p-2}{2} f \Delta f + p(p-1) \int_M \varphi (f^2 + \eps)^\frac{p-4}{2} f^2 |\nabla f|^2. 
\end{align*}
On the one hand, 
	\begin{align*}
\disp \left | \int_M \langle \nabla(f^2 + \eps)^\frac{p}{2}, \nabla \varphi \rangle \right| = & \disp p \left|\int_M (f^2 + \eps)^\frac{p-2}{2} f \langle \nabla f, \nabla \varphi \rangle \right| \\
\le & \disp p \int_M (f^2 + \eps)^\frac{p-1}{2} |\nabla f| |\nabla \varphi| \\
\le & \disp \frac{p}{r-R} \left( \int_{B_{r}} (f^2 + \eps)^{\frac{p}{2}} \right)^{\frac{p-1}{p}} \left( \int_{B_{r}} |\nabla f|^p \right)^{\frac{1}{p}},
	\end{align*}
on the other hand, 
	\[
	\left| \int_M \varphi (f^2 + \eps)^\frac{p-2}{2}f \Delta f\right| \le \int_M \varphi (f^2 + \eps)^\frac{p-1}{2} |\Delta f| \le \left( \int_{B_{r}} (f^2 + \eps)^\frac{p}{2} \right)^{\frac{p-1}{p}} \left( \int_{B_{r}} |\Delta f|^p \right)^{\frac{1}{p}}. 	
	\]
Summarizing, 
	\[
	\begin{array}{lcl}
	\disp \frac{4(p-1)}{p^2} \int_{B_R} \left| \nabla(f^2+\eps)^{\frac{p}{4}} \right|^2 & = & \disp (p-1) \int_{B_R} (f^2 + \eps)^\frac{p-4}{2} f^2 |\nabla f|^2 \\[0.5cm]
	& \le & \disp \| \sqrt{f^2 + \eps} \|_{L^p(B_{r})}^{p-1} \left( \frac{1}{r-R} \|\nabla f\|_{L^p(B_{r})} + \|\Delta f\|_{L^p(B_{r})} \right).
	\end{array}
	\]
Hence, $\{(f^2+\eps)^{\frac{p}{4}}\}$ is uniformly bounded in $W^{1,2}(B_R)$ and pointwise convergent to $|f|^{p/2}$. By a standard result (\cite[Lemma 6.2, p.16]{Friedman}), $|f|^{p/2} \in W^{1,2}(B_R)$ and $\nabla (f^2+\eps)^{p/4} \rightharpoonup \nabla |f|^{p/2}$ weakly on $B_R$, thus 
	\begin{align*}
	\frac{4(p-1)}{p^2} \int_{B_R} \left| \nabla |f|^{\frac{p}{2}} \right|^2 \le & \frac{4(p-1)}{p^2} \liminf_{\eps \to 0} \int_{B_R} \left| \nabla(f^2+\eps)^{\frac{p}{4}} \right|^2 \\
	\le & \| f \|_{L^p(B_{r})}^{p-1} \left( \frac{1}{r-R} \|\nabla f\|_{L^p(B_{r})} + \|\Delta f\|_{L^p(B_{r})} \right).
	\end{align*}
We now claim that \eqref{eq_local_reg} holds for $f \in W^{2,p}_\loc(M)$. Having chosen such $f$, by the Meyers-Serrin-type theorem in \cite{GGP-AccFinn} there exists $\{f_j\} \subset C^\infty(M)$ such that $f_j \to f$ in $W^{2,p}(B_{r})$ and pointwise almost everywhere. Applying \eqref{eq_local_reg} to $f_j$ shows that $\{|f_j|^{p/2}\}$ is uniformly bounded in $W^{1,2}(B_R)$, so by weak compactness and pointwise convergence we deduce that $|f_j|^{p/2} \rightharpoonup |f|^{p/2}$ in $W^{1,2}(B_R)$. Evaluating \eqref{eq_local_reg} on $f_j$, passing to limits and using the weak lower semicontinuity of gradients, we deduce \eqref{eq_local_reg} for each $f \in W^{2,p}_\loc(M)$, as claimed.

Assume that $f, |\nabla f|, \Delta f \in L^p(M)$. Then, by Remark \ref{rem_CZ} $f \in W^{2,p}_\loc(M)$ and thus letting $r = 2R \to \infty$ in \eqref{eq_local_reg} we readily deduce \eqref{eq_bello}.

Next, we examine the case $f \in \widetilde{W}^{2,p}(M)$. We first consider $f \in C^\infty_c(M)$, since this latter space is dense in $\widetilde{W}^{2,p}(M)$ by \cite[Appendix A]{GuneysuPigola_AMPA}. If $1<p\le 2$, by H\"older's inequality, Stampacchia's theorem and \eqref{eq_local_reg} we get
	\begin{align}\label{eq_align}
	\| \nabla f \|^2_{L^p(B_R)} = & \| \nabla f \|^2_{L^p(B_R \cap \{|f|>0\})} \\
	\le & \|f \|_{L^p(B_R \cap \{|f|>0\})}^{2-p} \left\| f^{\frac{p-2}{2}}\nabla f \right\|_{L^2(B_R \cap \{|f|>0\})}^2 \nonumber \\
	= & \frac{4}{p^2} \|f\|_{L^p(B_R)}^{2-p} \left \| \nabla |f|^\frac{p}{2} \right\|_{L^2(B_R \cap \{|f|>0\})}^2 \nonumber  \\
	\le & \frac{4}{p^2} \|f\|_{L^p(B_r)}^{2-p} \left \| \nabla |f|^\frac{p}{2} \right\|_{L^2(B_R)}^2 \nonumber \\
	\le & \frac{1}{p-1} \|f\|_{L^p(B_r)} \left( \frac{1}{r-R} \|\nabla f\|_{L^p(B_{r})} + \|\Delta f\|_{L^p(B_{r})} \right). \nonumber 
\end{align}
If $f \in \widetilde{W}^{2,p}(M)$, take $\{f_j\} \subset C^\infty_c(M)$ with $f_j \to f$ and $\Delta f_j \to \Delta f$ in $L^p(M)$. Applying \eqref{eq_align} to $f_j$ and to $f_j - f_i$, letting $r = 2R \to \infty$ and then $j \to \infty$ we deduce that $\nabla f_j \to \nabla f \in L^p(M)$. In particular, by \eqref{eq_gradient_reg} the function $f$ satisfies \eqref{eq_bello}. The same computations as in \eqref{eq_align} can therefore be performed with $R, r = \infty$, leading to \eqref{CDEstimate}.
\end{proof}


\subsection{Density for order $2$}

\begin{proof}[Proof of Theorem \ref{thm 2nd order}]
	 For $R\gg1$, let $\chi_R\in C^\infty_c(M)$ be a family of Laplacian cut-off functions such that 
\begin{itemize}
	\item $\chi_R= 1$ on $B_R(o)$,
	\item $|\nabla \chi_R|\leq C\lambda^{-1}(r)$ and $\|\nabla\chi_R\|_\infty\leq C \lambda^{-1}(R)$,
	\item $|\Delta \chi_R|\leq C$,
\end{itemize} 
 for some constant $C>0$ independent of $R$. Such a family has been constructed in (the proof of)  \cite[Corollary 5.2]{IRV-HO}.
As usual, first note that $C^{\infty}(M)\cap W^{2,p}(M)$ is dense in $W^{2,p}(M)$ (see for instance \cite{GGP-AccFinn}). Given a smooth $f\in W^{2,p}(M)$, define $f_{R}\doteq\chi_{R}f$. We get that 
 \begin{align}
 \|(f_R-f)\|_{L^p}&=\|((1-\chi_R)f)\|_{L^p}\label{conv1}\\
 \|\nabla (f_R-f)\|_{L^p}&\leq \|f\nabla\chi_R\|_{L^p}+\|(1-\chi_R)\nabla f\|_{L^p}\label{conv2}\\
 \|\nabla^2(f_R-f)\|_{L^p}&\leq  2\||\nabla \chi_R| |\nabla f|\|_{L^p}+\|(1-\chi_R)\nabla^2 f\|_{L^p}+\|f\nabla^2\chi_R\|_{L^p}\label{conv3}
 \end{align}
Both $(1-\chi_R)$ and $\nabla\chi_R$ are uniformly bounded and supported in $M\setminus B_R(o)$. Since $f\in W^{2,p}(M)$ this permits to conclude that all the terms at the RHS of \eqref{conv1}, \eqref{conv2} and \eqref{conv3} except the last one tend to $0$ as $R\to\infty$. Concerning 
$\|f\nabla^2\chi_R\|_{L^p}$, first observe that $p\leq 2$ and H\"older's inequality imply
\begin{equation}\label{f: holder}
\int_M |f|^p |\nabla^2\chi_R|^p \leq \left(\int_M |f|^p |\nabla^2\chi_R|^2\right)^{\frac p2} \left(\int_M |f|^p \right)^{\frac{2-p}{2}}.
\end{equation}
 Accordingly, to conclude it is enough to show that
\[ \int_M |f|^p |\nabla^2\chi_R|^2 \to 0 \qquad \text{as } \, R \to \infty.
\]
Inserting into Bochner formula
\begin{equation*}
\frac{1}{2} \diver\big( \nabla |\nabla u|^2 \big)  = |\nabla^2 u|^2 + \mathrm{Ric}(\nabla u, \nabla u) + \langle \nabla \Delta u, \nabla u \rangle \qquad \forall \, u \in C^\infty(M)
\end{equation*}
the function $u=\chi_R$, multiplying by $|f|^p$ and integrating over $M$ gives
\begin{align*}
\frac{1}{2} \int_M |f|^p \diver\big( \nabla |\nabla \chi_R|^2 \big)  &= \int_M |f|^p|\nabla^2 \chi_R|^2 + \int_M |f|^p\mathrm{Ric}(\nabla \chi_R, \nabla \chi_R) + \int_M |f|^p\langle \nabla \Delta \chi_R, \nabla \chi_R \rangle.
\end{align*}
Applying Stokes' theorem to the first and the last integral, we get
\begin{align}\label{bochner}
\int_M |f|^p|\nabla^2 \chi_R|^2
=& - \frac{1}{2} \int_M \langle \nabla (|f|^p), \nabla |\nabla \chi_R|^2 \rangle  - \int_M |f|^p\mathrm{Ric}(\nabla \chi_R, \nabla \chi_R) \\
&+ \int_M |f|^p|\Delta \chi_R|^2+\int_M \Delta \chi_R\langle \nabla (|f|^p), \nabla \chi_R \rangle.\nonumber
\end{align}
First, note that
\begin{align}\label{lapl_int}
\int_M \Delta \chi_R\langle \nabla (|f|^p), \nabla \chi_R \rangle\leq C \int_{M\setminus B_R(o)} |\nabla (|f|^p)| \leq C p \left(\int_{M\setminus B_R(o)} |\nabla |f||^p\right)^{1/p}\left(\int_{M\setminus B_R(o)} |f|^{p}\right)^{(p-1)/p}.
\end{align}
Similarly
\[
 - \int_M |f|^p\mathrm{Ric}(\nabla \chi_R, \nabla \chi_R) \leq \int_{M\setminus B_R(o)} C\lambda^{-2}(r)\lambda^2(r)|f|^p \leq C \int_{M\setminus B_R(o)} |f|^p 
\]
and 
\[
\int_M |f|^p|\Delta \chi_R|^2 \leq C \int_{M\setminus B_R(o)} |f|^p. 
\]
In particular, 
\begin{equation}\label{int_2}
- \int_M |f|^p\mathrm{Ric}(\nabla \chi_R, \nabla \chi_R) + \int_M |f|^p|\Delta \chi_R|^2+\int_M \Delta \chi_R\langle \nabla (|f|^p), \nabla \chi_R \rangle \to 0
\end{equation}
as $R \to \infty$ for $f\in W^{1,p}(M)$. 
Inserting \eqref{int_2} in \eqref{bochner} we deduce that, in order to prove that
\[
\int_M |f|^p|\nabla^2 \chi_R|^2\to 0
\]
as $R\to \infty$, it is enough to show that
\begin{equation}\label{eq_claimedineq}
\limsup_{R\to\infty}-\frac 12 \int_M \langle \nabla (|f|^p), \nabla |\nabla \chi_R|^2 \rangle - c\int_M |f|^p|\nabla^2 \chi_R|^2\leq 0,
\end{equation}
for some $c<1$ independent of $R$.
We first suppose that $p\in(1,2)$. By Kato and Young's inequalities we have that 
\begin{align}\label{int_1}
-\frac 12 \int_M \langle \nabla (|f|^p), \nabla |\nabla \chi_R|^2 \rangle 
&\leq  2 \int_M |f|^{\frac p2} \cdot|\nabla |f|^{\frac p2}|\cdot |\nabla \chi_R| \cdot |\nabla |\nabla \chi_R||\\
&\leq \frac 12 \int_M |f|^p|\nabla|\nabla \chi_R||^2 + 4 \int_{M\setminus B_R(o)} |\nabla |f|^{\frac p2}|^2\cdot |\nabla \chi_R|^2\nonumber\\
&\leq \frac 12 \int_M |f|^p|\nabla^2\chi_R|^2 + 4 \int_{M\setminus B_R(o)} |\nabla |f|^{\frac p2}|^2,\nonumber
\end{align}
where the last integral is finite and goes to $0$ as $R\to \infty$ due to Lemma \ref{lem_reg}. Hence, \eqref{eq_claimedineq} holds with $c = 1/2$.

In order to deal with the case $p=1$, we prove that the first addendum in \eqref{eq_claimedineq} vanishes as $R \to \infty$, so \eqref{eq_claimedineq} holds for every $c >0$. First, observe that, for each $R$,
\[
\int_M \langle \nabla |f|, \nabla |\nabla \chi_R|^2 \rangle=\lim_{\eps\to 0}\int_M \langle \nabla ((f^2+\eps)^{1/2}), \nabla |\nabla \chi_R|^2 \rangle.\]
Indeed, 
\begin{equation}\label{f: p1 1}
\left|\langle \nabla ((f^2+\eps)^{1/2}), \nabla |\nabla \chi_R|^2 \rangle\right|
\leq \left|\nabla |\nabla \chi_R|^2\right|\frac{|f|\,|\nabla f|}{(f^2+\eps)^{1/2}}\leq \left|\nabla |\nabla \chi_R|^2\right|\,|\nabla f|,
\end{equation}
so that Lebesgue's dominated convergence theorem applies. Next, for every $g \in C^1_c(M)$
\begin{align}\label{f: p1 2}
-\int_M \langle \nabla ((f^2+\eps)^{1/2}), \nabla g \rangle
&=\int_M \Delta((f^2+\eps)^{1/2})g\\
&= \frac 12 \int_M\frac{\Delta(f^2+\eps)}{(f^2+\eps)^{1/2}}g - \frac 14 \int_M  \frac{\left|\nabla(f^2+\eps)\right|^2}{(f^2+\eps)^{3/2}}g\nonumber\\
&= \int_M \frac{f\,\Delta f}{(f^2+\eps)^{1/2}}g + \int_M  \frac{\eps \left|\nabla  f\right|^2}{(f^2+\eps)^{3/2}}g,\nonumber 
\end{align}
and rearranging, we get 
\begin{align}\label{f: p1 chiR}
\int_M  \frac{\eps \left|\nabla  f\right|^2}{(f^2+\eps)^{3/2}}g &= -\int_M \langle \nabla ((f^2+\eps)^{1/2}), \nabla g \rangle -  \int_M \frac{f\,\Delta f}{(f^2+\eps)^{1/2}}g\\ 
&\leq  \int_M |\nabla f|\,| \nabla g| +  \int_M |\Delta f||g|.\nonumber
\end{align}
Since $|\nabla f|\in L^1(M)$, applying \eqref{f: p1 chiR} with $g= \chi_R$ and letting $R \to \infty$, we get
\begin{equation}\label{globalinte}
\int_M  \frac{\eps \left|\nabla  f\right|^2}{(f^2+\eps)^{3/2}}  
\leq   \int_M |\Delta f|.
\end{equation}
On the other hand, applying \eqref{f: p1 2} with $g = |\nabla \chi_R|^2$ and using \eqref{globalinte} we infer 

\begin{align*}
- \int_M \langle \nabla |f|, \nabla |\nabla \chi_R|^2 \rangle 
&= - \lim_{\eps\to 0}\int_M \langle \nabla ((f^2+\eps)^{1/2}), \nabla |\nabla \chi_R|^2 \rangle \\ \nonumber
&= \lim_{\eps \to 0} \left[ \int_M \frac{f\,\Delta f}{(f^2+\eps)^{1/2}}|\nabla \chi_R|^2 + \int_M  \frac{\eps \left|\nabla  f\right|^2}{(f^2+\eps)^{3/2}}|\nabla \chi_R|^2\right] \\ \nonumber 
&\leq 2\|\nabla\chi_R\|^2_\infty\,\int_{M} |\Delta f|
,
\end{align*}
which vanishes as $R\to\infty$ because of the properties of $\nabla \chi_R$, as claimed.
\end{proof}

\subsection{Density for higher orders}\label{section-HO}

We shall first recall a few facts about Sampson's Weitzenb\"ock formula for symmetric tensors. Given an $n$-dimensional Riemannian manifold $(M, g)$, consider a tensor bundle $E\to M$ with $n$-dimensional fibers endowed with an inner product induced by the metric $g$ and a compatible connection $\nabla$ induced by the Levi-Civita connection on $M$. A Lichnerowicz Laplacian $\Delta_{L}$ for $E$ is a second order differential operator acting on the space of smooth sections $\Gamma(E)$ of the form
\[
\ \Delta_{L}=\Delta_{B}+c\,\mathfrak{Ric},
\]
for $c$ a suitable constant. Here  $\Delta_{B}\doteq-\mathrm{tr}_{12}(\nabla^{2})=\nabla^{*}\nabla$ is the Bochner Laplacian (with $\nabla^{*}$ denoting the formal $L^{2}$-adjoint of $\nabla$) and $\mathfrak{Ric}$ is a smooth symmetric endomorphism of $\Gamma(E)$ which is called the Weitzenb\"ock curvature operator. 
As an example, note that when $T$ is a $(0,k)$-tensor then
\[
\ \mathfrak{Ric}(T)(X_{1},\ldots, X_{k})=\sum_{i=1}^{k}\sum_{j}(R(E_{j},X_{i})T)(X_{1},\ldots,E_{j},\ldots, X_{k}),
\]
with $\left\{E_{i}\right\}$ a local orthonormal frame and 
\[
\ R(X, Y) \doteq \nabla^{2}_{X,Y}-\nabla^{2}_{Y,X}=\nabla_{X}\nabla_{Y}-\nabla_{Y}\nabla_{X}-\nabla_{[X,Y]},
\]
which may act on any tensor field. It is important to notice that the Weitzenb\"ock curvature term for $(0,k)$-tensors can actually be estimated in terms of the curvature operator $\mathcal{R}$ of $M$. Indeed, if $\mathcal{R}\geq \alpha$, for some constant $\alpha<0$ then $g(\mathfrak{Ric}(T),T)\geq \alpha C|T|^2$, where $C$ depends only on $k$; see \cite[Corollary 9.3.4]{petersen}. This key feature of Lichnerowicz Laplacians permits to use geometric assumptions in estimation results.

As beautifully illustrated in \cite[Chapter 9]{petersen} there are several natural Lichnerowicz Laplacians on Riemannian manifolds. A very classical one is the Hodge-Laplacian $\Delta_{H}$ acting on exterior differential forms, for which the Weitzenb\"ock identity takes the form
\begin{equation}\label{f: Weitz}\Delta_{H}\omega\doteq (d\delta + \delta d) \omega  = \Delta_{B}\omega+\mathfrak{Ric}(\omega).\end{equation}
The Bochner identity which we used in the previous section precisely comes out from this formula evaluated on the 1-form $d\chi_R$. When considering the higher order case $k>2$, one may be tempted to use \eqref{f: Weitz} applied to the $(k-1)$-th covariant derivative of suitable cut-off functions. Unfortunately,  this latter is not at all skew-symmetric. However, it is at least almost symmetric, meaning that it can be decomposed as a totally symmetric principal part plus other terms involving derivatives of order at most $k-2$. This fact led the authors of \cite{IRV-HO} to consider a different Lichnerowicz Laplacian, acting on totally symmetric covariant tensors of any order, which was originally introduced by Sampson in \cite{Sampson} and that we now recall. Let $T^{(0,k)}(M)$ and $S^{(0,k)}(M)$ be, respectively, the bundle of $k$-covariant tensor and its subbundle of totally symmetric ones. Consider the operator $D_{S}:\Gamma S^{(0,k-1)}(M)\to\Gamma S^{(0,k)}(M)$ acting on $h\in\Gamma S^{(0,k-1)}(M)$ by
\begin{align*}
(D_{S}h)(X_{0}, \ldots, X_{k-1}) \doteq k s_{k}(\nabla h)(X_{0}, \ldots, X_{k-1}),
\end{align*}
where we are denoting  by $s_{k}$  the symmetrization operator, i.e. the projection of $T^{(0,k)}(M)$ onto  $S^{(0,k)}(M)$, that we shortly denote with a superscript $S$. Namely, 
\[
T^{S}(X_{1},\ldots,X_{k}) \doteq s_{k}(T)(X_{1},\ldots,X_{k}) = \frac{1}{k!}\sum_{\sigma\in\Pi_{k}}T(X_{\sigma(1)},\ldots,X_{\sigma(k)}) \qquad \forall \, T \in T^{(0,k)}(M).
\] 
Note that
	\begin{equation}\label{eq_tensors1}
|T^S| \le |T| .
	\end{equation}
The formal  $L^{2}$- adjoint of $D_{S}$ is $D_{S}^{*}: \Gamma S^{(0,k)}(M)\to\Gamma S^{(0,k-1)}(M)$  which acts on $\tilde{h}\in\Gamma S^{(0,k)}(M)$ by
\begin{align*}
(D_{S}^{*}\tilde{h})(X_{1}, \ldots, X_{k-1})=&-\sum_{i}(\nabla_{E_{i}}\tilde{h})(E_{i}, X_{1}, \ldots, X_{k-1}).
\end{align*}

We can now define the second order differential operator $\Delta_{\mathrm{Sym}}$ acting on $\Gamma S^{(0,k)}(M)$ via the following Hodge-type decomposition 
\[
\Delta_{\mathrm{Sym}}\doteq D_{S}^{*}D_{S}-D_{S}D_{S}^{*}
\]
 By \cite{Sampson} (see also \cite[Appendix B]{IRV-HO} for a proof) we have that
\begin{equation}\label{f: Samps}
\ \Delta_{\mathrm{Sym}}=\Delta_{B} -\mathfrak{Ric},
\end{equation}
i.e. $\Delta_{\mathrm{Sym}}$ is a Lichnerowicz Laplacian (with  the choice $c=-1$).
\medskip

Exploiting \eqref{f: Samps} we readily deduce the validity of the differential identity
\begin{equation}\label{IneqDeltaSym0}
\frac{1}{2}\Delta \left|T^{S}\right|^2 = -\langle\Delta_{\mathrm{Sym}}T^{S}, T^{S}\rangle-\langle\mathfrak{Ric}(T^{S}), T^{S}\rangle+|\nabla T^{S}|^2,\qquad\forall\,T\in \Gamma T^{(0,k)}(M) .
\end{equation}

\begin{remark}
	\rm{Notice that a totally symmetric $1$-tensor $\omega$ is also a skew-symmetric one-form. In this case, $\Delta_{\mathrm{Sym}}\omega=2\Delta_{B}\omega-\Delta_{H}\omega$, so that \eqref{f: Weitz} and \eqref{f: Samps} are equivalent for $1$-tensors, as it has to be. However, when deducing the Bochner-type formula, the Weitzenb\"ock curvature term appears with a different sign. }
\end{remark}

Let us now move to the proof of our density result.
\begin{proof}[Proof of Theorem \ref{teo_higher}] In our assumptions, we know by \cite[Corollary 5.2]{IRV-HO} that there exists a sequence of cut-off functions $\left\{\chi_{n}\right\}\subset C_{c}^{\infty}(M)$, and a constant $C>0$ independent of $n$ such that, 
\begin{align}
 &\chi_{n}=1 \quad\textrm{on}\quad B_{R_n}(o), \qquad \text{with } \, R_n \doteq C_{H}^{-1}(n-2) \nonumber\\
&|\nabla^{j}\chi_{n}|\leq C\lambda^{-k+j}(r), \quad j=1,\ldots,k-1; \label{proprie_chin}\\
&|\Delta\nabla^{k-2}\chi_{n}|\leq C,\nonumber
\end{align}
where $r$ is the distance from $o$. These cut-off functions were called in \cite{IRV-HO} $k$-th order rough Laplacian cut-offs. It is important to note that the fact that we are asking only for a control on the trace of the $k$-th covariant derivative of the cut-offs (which suffices for our scope) reflects on the weakness of the assumptions we are asking for. Indeed, we are demanding a control on the curvature up to a smaller order than usual (case $p>2$).
\smallskip

Since smooth functions are dense in $W^{k,p}(M)$, to prove the density result it is sufficient to consider $f\in C^{\infty}(M)\cap W^{k,p}(M)$; see for instance \cite{GGP-AccFinn}. We want to prove that $\chi_{n}f$ converges to $f$ in $W^{k,p}(M)$. The lower order terms 
\[
\ \int_{M}|\nabla^{j}(\chi_{n}f)-\nabla^{j}f|^p, \qquad 0 \le j \le k-1
\]
are easily seen to vanish as $n \to \infty$ by using the Cauchy-Schwarz inequality, Lebesgue convergence theorem and the properties of the cut-off functions. Regarding the $k$-th order derivative, we write 
\begin{align*}
\int_{M}|\nabla^{k}(\chi_{n}f)-\nabla^{k}f|^p =&\int_{M}\left|\left[\sum_{i=0}^{k}{{k}\choose{i}} \nabla^{k-i}\chi_{n}\otimes\nabla^{i}f\right]-\nabla^{k}f\right|^p \\
\leq&C\,\int_{M}(1-\chi_{n})^p|\nabla^{k}f|^p+\sum_{i=0}^{k-1}{{k}\choose{i}}\int_{M}|\nabla^{k-i}\chi_{n}|^p|\nabla^{i} f|^p .
\end{align*}
Taking into account the properties of the cut-off functions, all of the addenda vanish as $n \to \infty$ with the possible exception of the one corresponding to $i=0$. Applying H\"older inequality as in \eqref{f: holder} we deduce that, in order to conclude, it is enough to show that
\[
\ \int_{M}|f|^{p}|\nabla^{k}\chi_{n}|^2\to 0
\]
as $n\to\infty$. Define $h_n=\nabla^{k-1}\chi_{n}$ and its simmetrization $h_n^S$. Because of \eqref{IneqDeltaSym0}, 
\begin{align*}
\frac{1}{2}\mathrm{div}\left(|f|^{p}\nabla\left|h_n^{S}\right|^2\right)= &|f|^{p}\left[-\langle\Delta_{\mathrm{Sym}}h_n^{S}, h_n^{S}\rangle-\langle\mathfrak{Ric}(h_n^{S}), h_n^{S}\rangle+|\nabla h_n^{S}|^2\right]\\
&+ \frac{1}{2}\langle\nabla(|f|^p), \nabla (|h_n^{S}|^2)\rangle,\nonumber
\end{align*}
thus integrating and using \cite[Corollary 9.3.4]{petersen} to control the curvature term we get 
\begin{align}\label{IneqDeltaSym}
\int_{M}\langle\Delta_{\mathrm{Sym}}h_n^{S}, |f|^{p}h_n^{S}\rangle  \leq &-\int_{M}|f|^{p}\langle\mathfrak{Ric}(h_n^{S}), h_n^{S}\rangle +\int_{M}|f|^{p}|\nabla h_n^{S}|^2 \\
& + \frac{1}{2}\langle\nabla(|f|^p), \nabla (|h_n^{S}|^2)\rangle \nonumber\\
\leq & (-\alpha)C\int_{M}|f|^{p}|h_n^{S}|^2 +\int_{M}|f|^{p}|\nabla h_n^{S}|^2 \nonumber\\
& + \frac{1}{2}\langle\nabla(|f|^p), \nabla (|h_n^{S}|^2)\rangle.\nonumber
\end{align}
Suppose first that $p\in(1,2]$. By Young's inequality, the regularity Lemma \ref{lem_reg} and the properties  of $h_n$, 
\begin{align}\label{IneqDeltaSym_2}
\int_{M}\langle\Delta_{\mathrm{Sym}}h_n^{S}, |f|^{p}h_n^{S}\rangle  \leq &  (-\alpha)C\int_{M \setminus B_{R_n}(o)}|f|^{p}|h_n^{S}|^2 +\int_{M}|f|^{p}|\nabla h_n^{S}|^2 \\
& +\eta \int_M |f|^p|\nabla|h_{n}^S||^2 + \frac{1}{\eta} \int_{M\setminus B_{R_n}(o)} |\nabla |f|^{\frac p2}|^2\nonumber,
\end{align}
for any $\eta>0$. Notice also that
\begin{equation}\label{ConsCS}
|h_n^S| \le |h_n| = |\nabla^{k-1}\chi_{n}|\leq C\lambda^{-1}(R_n). 
\end{equation}
By the dominated convergence theorem, the fact that $f\in W^{k,p}(M)$ and Lemma \ref{lem_reg}, the first and fourth term in the RHS of \eqref{IneqDeltaSym_2} vanish as $n \to \infty$, so using Kato's inequality $|\nabla|h_n^S||\leq|\nabla h_n^S|$ we obtain
\begin{align}\label{IneqDeltaSym2}
\limsup_{n\to \infty}\left[\int_{M}\langle\Delta_{\mathrm{Sym}}h_n^{S}, |f|^{p}h_n^{S}\rangle  -(1+\eta) \int_M |f|^p|\nabla h_n^S|^2\right]\leq 0.
\end{align}
Define 
	\[\mathscr{A}_n= \int_{M}\langle D_{S}^{*}D_{S}h_n^S, |f|^{p}h_n^S\rangle, \qquad \mathscr{B}_n=\int_{M}\langle D_{S}D_{S}^{*} h_n^S,|f|^{p}h_n^S\rangle,
	\]
so that \eqref{IneqDeltaSym2} becomes
\begin{equation}\label{AB}
\limsup_{n\to \infty}\left[ \mathscr{A}_n - \mathscr{B}_n -(1+\eta) \int_M |f|^p|\nabla h_n^S|^2\right]\leq 0.
\end{equation}
\medskip

By Young's inequality and using \eqref{ConsCS}, for $\delta>0$ we can estimate $\mathscr{A}_n$ as follows:
\begin{align}\label{A}
\mathscr{A}_n=&\int_{M} \langle D_S h_n^S, D_S(|f|^p h_n^S)\rangle  \\
=&\int_{M}|f|^{p}|D_{S}h_n^S|^2 + k\int_M \langle D_{S}h_n^S, 2|f|^{\frac p2}s_{k}\left(d|f|^{\frac p2}\otimes h_n^S\right)\rangle  \nonumber\\
\geq& (1-\delta)\int_{M}|f|^{p}|D_{S}h_n^S|^2\, -\frac{k^2}{\delta}\int_{M}|\nabla (|f|^{\frac p2})|^2\,|h_n^S|^2 ,\nonumber \\
= & (1-\delta)\int_{M}|f|^{p}|D_{S}h_n^S|^2 + o_n(1) \qquad \text{as } \, n \to \infty,\nonumber
\end{align}
where the last line follows by the regularity Lemma and since $h_n^S$ is bounded and supported away from $B_{R_n}(o)$.
\medskip

Regarding the term $\mathscr{B}_n$, H\"older inequality gives
\begin{align}\label{B}
\mathscr{B}_n=&\int_{M}\langle D_{S}^{*}h_n^S, D_{S}^{*}(|f|^{p}h_n^S) \rangle  \\
=& \int_{M}\left[|f|^p|D_{S}^{*} h_n^S|^2 -\langle i_{\nabla (|f|^{p})}h_n^S, D_{S}^{*}h_n^S\rangle\right] \nonumber\\
\leq &\int_{M}|f|^{p}|D_{S}^{*}h_n^S|^2 +\int_{M}|\nabla( |f|^{p})||D_{S}^{*}h_{n}^{S}|\,|h_n^S|\nonumber\\
\leq&\int_{M}|f|^{p}|D_{S}^{*}h_{n}^{S}|^2+p\left(\int_{M}|f|^{p}|D_{S}^{*}h_{n}^{S}|^{\frac{p}{p-1}}\right)^{\frac{p-1}{p}}\left(\int_{M}|\nabla|f||^{p}|h_{n}^{S}|^p\right)^{\frac{1}{p}}\nonumber
\end{align}
By the Ricci identities, a computation (see \cite[pp. 31]{IRV-HO}) shows that 
	\begin{align*}
	|D_{S}^{*}h_n^S|^2 & = |D_{S}^{*} (\nabla^{k-1}\chi_n)^S|^2 \\
	& \le C \Big( |\Delta\nabla^{k-2}\chi_{n}|^2+|\mathrm{Riem}|^2|\nabla^{k-2}\chi_{n}|^2+\ldots+|\nabla^{k-3}\mathrm{Riem}|^2|\nabla\chi_{n}|^2\Big) \\
	& \le \left\{ \begin{array}{ll}
	C' & \text{on } M\setminus B_{R_n}(o) \\
	0 & \text{otherwise}
	\end{array}\right. \qquad \text{by our decay assumptions on ${\rm Riem}$ and by \eqref{proprie_chin}.}
	\end{align*}
Hence, \eqref{B} and $f \in W^{k,p}(M)$ imply that $\disp \limsup_{n \to \infty} \mathscr{B}_n \le 0$. Note that these estimates for $\mathscr{B}_n$ also hold for $p=1$, and indeed the fourth line of \eqref{B} is unnecessary in such case.
\medskip

 Inserting \eqref{A} and \eqref{B} into \eqref{AB} gives
\begin{equation}\label{deltaeta}
\limsup_{n\to \infty}\int_{M}|f|^{p}\left[(1-\delta)|D_{S}h_n^S|^2-(1+\eta)|\nabla h_n^S|^2\right] \le 0.
\end{equation}
Moreover, by the same reasoning as above and by Young's inequality (see \cite[pp.32-33]{IRV-HO}),
\begin{align}
|\nabla h_{n}^S|^2 & = |\frac{1}{(k-1)!}\nabla s_{k-1}(\nabla^{k-1}\chi_{n})|^2\label{eq_tensors2}\\
& \leq (1+\varepsilon C_{1,k})|\nabla h_{n}|^2+\frac{C_{1,k}}{\varepsilon}\Big( |\mathrm{Riem}|^2|\nabla^{k-2}\chi_{n}|^2+\ldots+|\nabla^{k-3}\mathrm{Riem}|^2|\nabla\chi_{n}|^2\Big),\nonumber \\
|D_{S} h_{n}^S|^2 & = k^2 | s_{k}(\nabla s_{k-1}(\nabla^{k-1}\chi_{n}))|^2=k^{2}|s_{k}(\nabla^{k}\chi_{n})|^2\label{eq_tensors3}\\
& \geq (k^2-\varepsilon C_{2,k})|\nabla h_{n}|^2-\frac{C_{2,k}}{\varepsilon}\Big( |\mathrm{Riem}|^2|\nabla^{k-2}\chi_{n}|^2+\ldots+|\nabla^{k-3}\mathrm{Riem}|^2|\nabla\chi_{n}|^2\Big),\nonumber
\end{align}
for any $\varepsilon>0$, and some constants $C_{1,k},C_{2,k}$. 
Using \eqref{eq_tensors2}, \eqref{eq_tensors3},  the decay assumptions on $\mathrm{Riem}$ and $f\in L^{p}(M)$, we get that
\[
\ \limsup_{n\to\infty}\int_{M}|f|^{p}\left[(1-\delta)(k^2-\varepsilon C_{2,k})-(1+\eta)(1+\varepsilon C_{1,k})\right]|\nabla h_{n}|^2\leq 0
\]
 Hence, we can choose $\delta,\eta, \varepsilon$ small enough such that  $(1-\delta)(k^2-\varepsilon C_{2,k})-(1+\eta)(1+\varepsilon C_{1,k})>0$, which leads to
\begin{equation*}\label{deltaeta_2}
\int_{M}|f|^{p}|\nabla h_n|^2 \to 0 \qquad \text{as } \, n \to \infty,
\end{equation*}
thus concluding the proof for $p\in(1,2]$. 
\medskip

We suppose now that $p=1$. We first note that 
\begin{equation}\label{claimp1}
\lim_{n\to \infty} -\frac{1}{2}\int_{M}\langle\nabla|f|, \nabla (|h_n^{S}|^2)\rangle\,  =0.
\end{equation}
Indeed, by Lebesgue convergence theorem, 
\[
-\frac{1}{2}\int_{M}\langle\nabla|f|, \nabla (|h_n^{S}|^2)\rangle\,  =\lim_{\eps\to 0}-\frac{1}{2}\int_{M}\langle\nabla((f^2+\varepsilon)^{1/2}), \nabla (|h_n^{S}|^2)\rangle\,  
\] 
So performing the same computations as in \eqref{f: p1 1}, \eqref{f: p1 2} and \eqref{f: p1 chiR} we obtain 
\begin{equation*}\label{121314}
-\frac{1}{2}\int_{M}\langle\nabla((f^2+\varepsilon)^{1/2}), \nabla (|h_n^{S}|^2)\rangle\,  \leq \|h_n^{S}\|^2_\infty\int_{M}|\Delta f|.   
\end{equation*}
Since the RHS above vanishes as $n\to\infty$ because of \eqref{ConsCS}, this proves the claimed identity \eqref{claimp1}. From \eqref{IneqDeltaSym} we therefore deduce
\begin{equation}\label{DeltaSym3}
\limsup_{n\to \infty}\left[\mathscr{A}_n - \mathscr{B}_n -\int_M |f||\nabla h_n^S|^2\right]\leq 0.
\end{equation}
As the estimate for $\mathscr{B}_n$ holds also for $p=1$, we only have to deal with $\mathscr{A}_n$:
\begin{align*}
\mathscr{A}_n=&\int_{M}\langle D_{S}h_{n}^{S}, D_{S}(|f|h_{n}^{S})\rangle\\
=&\int_{M}|f||D_{S}h_{n}^{S}|^2+k\int_{M}\langle D_{S}h_{n}^{S},s_{k}(d|f|\otimes h_{n}^{S})\rangle.
\end{align*}
By Lebesgue convergence theorem,
\[
\int_{M}\langle D_{S}h_{n}^{S}, s_{k}(d|f|\otimes h_{n}^{S})\rangle =\lim_{\varepsilon\to 0}\int_{M}\langle D_{S}h_{n}^{S}, s_{k}(d(\sqrt{f^{2}+\varepsilon})\otimes h_{n}^{S})\rangle,
\]
hence we compute
\begin{align*}
\left| \int_{M}\langle D_{S}h_{n}^{S},s_{k}(d|f|\otimes h_{n}^{S})\rangle \right| =& \left|\lim_{\varepsilon\to0}\int_{M}\langle h_{n}^{S}, D_{S}^{*}(s_{k}(d(\sqrt{f^{2}+\varepsilon})\otimes h_{n}^{S}))\rangle \right|\\
=& \left|\lim_{\varepsilon\to 0}\left[\int_{M}\frac{f\Delta f}{\sqrt{f^{2}+\varepsilon}}|h_{n}^{S}|^2+\frac{f}{2\sqrt{f^{2}+\varepsilon}}\langle\nabla f, \nabla |h_{n}^{S}|^2\rangle+\frac{\varepsilon}{(f^{2}+\varepsilon)^{3/2}}|\nabla f|^2|h_{n}^{S}|^2\right]\right|\\
\leq& 3 \left\|h_{n}^{S}\right\|^2_{\infty}\int_{M}|\Delta f|,
\end{align*}
where for the last inequality we reasoned again as in \eqref{f: p1 1}, \eqref{f: p1 2} and \eqref{f: p1 chiR}. Summarizing, 
\[
\mathscr{A}_n =\int_{M}|f||D_{S}h_{n}^{S}|^2+o_{n}(1) \qquad \text{as } \, n \to \infty,
\]
and the proof can be concluded as in the case $p>1$.
\end{proof}


\section{Non-density when $p > 2$: a counterexample with curvature $\Sec \ge - 1$}
To begin with, we construct a suitable complete, convex hypersurface $(M,g_0) \hookrightarrow \HH^{n+1}$ of finite volume and with two ends. Let us consider cartesian coordinates $(\mathbf x, z)=(x_1,\dots,x_n,z)$ on $\R^{n+1}$. Let $\mathbb B_1=\{|\mathbf x|^2 + z^2<1\}$ be the unit ball centered at the origin. Let $h$ be the hyperbolic metric on $\mathbb B_1$ induced by the Beltrami-Klein projective model, i.e. 
\[
h= \frac{\|d\mathbf y\|^2}{1-\|\mathbf y\|^2}+\frac{(\mathbf y\cdot d\mathbf y)^2}{(1-\|\mathbf y\|^2)^2}, 
\] 
where $\mathbf y\in \mathbb B_1$ and $\|\cdot\|$ is the standard Euclidean norm of $\R^{n+1}$. 
Define the noncompact hypersurface $M$ by 
\[
M= \Big\{|\mathbf x|=-\sqrt 3 + \sqrt{4-z^2}\ :\ z\in(-1,1)\Big\}\subset \mathbb B_1,
\] 
and let $g_0$ be the metric on $M$ induced by $h$. Note that $M$ is the boundary of a domain which is strictly convex in $\R^{n+1}$, hence also in $(\mathbb B_1,h)$ since the Beltrami-Klein model is projective. Thus $\Sec_{g_0} > -1$ by Gauss equations. Furthermore, $M$ is invariant by reflection with respect to the plane $z=0$, and $M \cap \{z \ge 0\}$ can be written as the graph of the strictly concave function 
	\begin{equation}\label{def_f_graph}
	f : D \doteq \overline{B_{2-\sqrt{3}}^{\R^n}(0)} \setminus \{0\} \to [0,\infty), \qquad f(\mathbf{x}) = \sqrt{1 - |\mathbf{x}|^2 - 2\sqrt{3} |\mathbf{x}|}, 
	\end{equation}
where $B_{2-\sqrt{3}}^{\R^n}(0)$ is the Euclidean ball of radius $2-\sqrt{3}$ in $\{z=0\}$. Hereafter, we will shortly say that $M$ is the bigraph of $f$. Denote with $\tilde f = \mathrm{id} \times f$ the graph map. Note that $(M,g_0)$ lies in the interior region of the double cone  
\begin{equation}\label{def_K}
K=\left\{|\mathbf x|=\frac{1-|z|}{\sqrt 3}\ :\ z\in(-1,1)\right\}\stackrel{i}{\hookrightarrow} (\mathbb B_1,h), 
\end{equation}
and that $K$ has finite volume. This can be easily proved by a direct computation, for instance by noticing that each of the two cones forming $K$ is isometric to the half cylinder $\{|\mathbf x|=1, z>1\}$ in the Poincar\'e half-space model. Since the orthogonal projection on a convex set of $\mathbb H^{n+1}$ is distance decreasing by the hyperbolic Buseman-Feller theorem \cite[II.2.4]{BH}, we deduce that $(M,g_0)$ has finite volume. We fix 
	\[
\vm \Subset \ \ \text{bigraph of $f$ over } \ \overline{B^{\R^n}_{2-\sqrt{3}}(0)}\setminus B^{\R^n}_{\frac 18}(0)
	\]
whose closure is diffeomorphic to a closed ball (in particular, $\vm$ does not disconnect $M$) and we define 
	\[
	U_0 \doteq \emptyset, \qquad U_j \doteq \ \text{bigraph of $f$ over } \ B_{2-\sqrt{3}}^{\R^{n}}(0)\setminus \overline{B^{\R^n}_{\frac 1{j+8}}(0)} \ \ \ \text{ for } \, j \ge 1.
	\]	
	
Roughly speaking, $M$ looks like an American football in vertical position with respect to $\{z=0\}$, and $U_j$ corresponds to the open set obtained by removing an upper and a lower cap centered at the two vertices. 

We begin by constructing, for fixed $j$, a sequence of smooth metrics $\{\sigma_{j,k}\}_{k=0}^\infty$ on $M$ having $k$ ``approximated spikes" in $U_j \setminus U_{j-1}$ and converging, as $k \to \infty$, to an Alexandrov metric that has a dense set of sharp points on $U_j \setminus U_{j-1}$. This is the content of the next Section. Before we get going, let us recall the notion of sharp singular point, and some basic facts of Alexandrov (more generally, $\RCD$) spaces that will be useful later on. The theory of metric measure spaces $(X,\di, {\sf m})$ (${\sf m}$ a Radon measure on $X$) that lie in $\RCD(K,n)$ hugely developed in the past 20 years, and for an informative account, with a detailed set of references, we recommend  \cite{ambrosio_survey}. Here, we just point out that $\RCD(K,n)$ contains all Alexandrov spaces with dimension $n$ and curvature bounded from below by $K/(n-1)$, with ${\sf m}$ the $n$-dimensional Hausdorff measure, as well as the pointed measured Gromov-Hausdorff (mGH) limits of smooth manifolds $(M_i,g_i, o_i)$ with $\Ricc \ge K$, endowed with their Riemannian measure ${\sf m}_i$ and reference points $o_i$. For $X \in \RCD(K,n)$, the Sobolev spaces $W^{1,p}(X)$ can be defined for $p \in (1,\infty)$, and $W^{1,2}(X)$ is Hilbert. Given $(X,\di, {\sf m}) \in \RCD(K,n)$ and $x_0 \in X$, the density
	\[
	\vartheta(x_0) \doteq \lim_{r \to 0} \frac{\me(B_r(x_0))}{r^n} \in (0, \infty]
	\]
does exist. A tangent cone at $x_0$ is, by definition, the mGH limit of some sequence of rescalings
	\[
	\left(X, \frac{\di}{\lambda_i}, \frac{\me}{\lambda_i^n}, x_0\right) \qquad \text{where } \, \lambda_i \to 0^+,
	\]
and the set of tangent cones is closed under mGH convergence pointed at $x_0$. Under the non-collapsing condition $\vartheta(x_0) < \infty$, every tangent cone at $x_0$ is a metric cone $C(Z)$ over a cross section $Z \in \RCD(n-1,n)$ with diameter $\le \pi$, that is, it can be written as $[0,\infty) \times Z$ with distance
	\[
	\di_{C(Z)} \big( (t,x), (s,y)\big) = \sqrt{t^2 + s^2 - 2ts \cos\big( \di_Z(x,y)\big)}.
	\]
The section is unique for Alexandrov spaces, but this may not be the case in general. Following \cite{dePNZ}, we say that $x_0 \in X$ is \emph{sharp} if $\vartheta(x_0) < \infty$ and the cross section of any tangent cone at $x_0$ has diameter $< \pi$.

\subsection{Construction of the spike metrics $\sigma_{j,k}$}\label{AppendixA}

It is well-known that there exist manifolds $(M,g_k)$ with $\Sec_{g_k} \ge 0$ that converge to an Alexandrov space having a dense set of sharp singular points, \cite{OtsuShioya}. In the next Lemma we will need to localize such a construction, namely, to approximate the singular points in $U_{j}\setminus U_{j-1}$ without modifying the metric $g_{j}$ outside. To this end, we adapt the construction introduced in \cite{MV} to a hyperbolic background. As we shall need more information on the sequence of approximating metrics, the proof of the next result will be done in full detail.

\begin{lemma}\label{l: approximation}
		For $j\geq 1$, there exists a sequence of smooth metrics $\{\sigma_{j,k}\}_{k \in \mathbb{N}}$ on $M$ such that 
	\begin{align}
	& \text{$\sigma_{j,k} = g_0$ outside of a compact subset of $U_{j}\setminus U_{j-1}$ (depending on $k$),} \\
	& \Sec_{\sigma_{j,k}} > -1 \qquad \text{on } \, M,  \\ 
	& \forall \, k : \NN_{>0} \to \NN, \quad \forall \, S \subset M \ \text{Borel}, \qquad \sum_{j=1}^\infty \vol_{\sigma_{j,k(j)}}\big(S \cap (U_j \setminus U_{j-1})\big) \le \vol_{i^\star h}(K)< \infty \label{proprie_basic} \\
	& \exists \, C_j>1 \ \text{such that} \quad C_j^{-1} \di_{g_0}(x,y) \le \di_{\sigma_{j,k}}(x,y) \le C_j \di_{g_0}(x,y) \qquad \forall \, k \in \NN \cup \{0\}, \ x,y \in M. \label{eq_bilipschitz}
	\end{align}
Moreover, $(M, d_{\sigma_{j,k}}, o) \to M_{j,\infty} \doteq (M,\di_{j,\infty}, o)$ as $k\to\infty$ in the Gromov-Hausdorff sense, for some $n$-dimensional Alexandrov space $M_{j,\infty}$ biLipschitz homeomorphic to $M$, with curvature greater than or equal to $-1$, volume $\mathcal H^n(M_{j,\infty})\le \vol_{i^\star h}(K)< \infty$, and a dense set of sharp singular points in $U_j \setminus U_{j-1}$. 
	\end{lemma}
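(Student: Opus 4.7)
The plan is to adapt the construction of Marini-Veronelli \cite{MV} to the hyperbolic background by exploiting that $(M,g_0)$ is embedded as a convex hypersurface of the Beltrami-Klein ball $(\B_1,h)$, and then reading the sectional curvature bound off Gauss's equation, since any smooth convex hypersurface of $\HH^{n+1}$ has non-negative second fundamental form, hence sectional curvature at least $-1$.

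First I would fix a countable dense subset $\{p_m\}_{m\ge 1}$ of $U_j\setminus\overline{U_{j-1}}$, together with pairwise disjoint Euclidean balls $B_{r_m}(p_m)\subset D\setminus\{0\}$ with $r_m\downarrow 0$, all contained in a fixed compact subset of $U_j\setminus U_{j-1}$. Using the graph parametrization of $M\cap\{z\ge 0\}$ via the strictly concave function $f$ in \eqref{def_f_graph}, I would introduce a single radial reference profile $\eta_k:\overline{B_1^{\R^n}(0)}\to[0,\infty)$, smooth, concave, flat near $\partial B_1$, with $\|\eta_k\|_\infty\le 1$ and $|\nabla\eta_k|\le 2$ uniformly in $k$, such that $\eta_k\to(1-|\cdot|)_+$ uniformly and smoothly away from the origin. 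Setting $\psi_{m,k}(\mathbf x)\doteq r_m\,\eta_k\bigl((\mathbf x-p_m)/r_m\bigr)$ and
\[
f_k \doteq f + \sum_{m=1}^{k}\psi_{m,k},
\]
I let $\sigma_{j,k}$ be the metric on $M$ induced by bigraphing $f_k$ (mirroring on $\{z<0\}$) and pulling back $h$.

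Concavity of $f$ together with disjointness and concavity of the $\psi_{m,k}$ keeps $f_k$ concave, so the bigraph of $f_k$ is convex in $\R^{n+1}$, hence in $(\B_1,h)$ by projectivity of the Beltrami-Klein model, and Gauss's equation yields $\Sec_{\sigma_{j,k}}>-1$. By taking the heights $r_m$ small enough, the bigraph remains inside the double cone $K$ of \eqref{def_K}, so the hyperbolic Buseman-Feller projection onto $K$ is distance-decreasing and hence area-decreasing on convex hypersurfaces, giving $\vol_{\sigma_{j,k}}(M)\le \vol_{i^\star h}(K)<\infty$; summation along the essentially disjoint annular decomposition yields \eqref{proprie_basic} for every selector $k(\cdot)$. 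The uniform bounds $\|\psi_{m,k}\|_\infty\le r_m$ and $|\nabla\psi_{m,k}|\le 2$, together with smooth $C^1$ estimates on $f$ over the compact set containing the spikes, produce a pointwise comparison of $\sigma_{j,k}$ with $g_0$ whose constants are independent of $k$, and a chaining argument along nearly minimizing geodesics integrates this to \eqref{eq_bilipschitz}.

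Letting $k\to\infty$, the $f_k$ converge uniformly and monotonically to a concave Lipschitz function $f_\infty$, whose bigraph is a closed convex subset of $(\B_1,h)$. The uniform biLipschitz control and a standard diagonal/compactness argument produce the GH limit $(M,\di_{j,\infty})=M_{j,\infty}$; convexity of the limit hypersurface gives an Alexandrov lower curvature bound $\ge -1$ on the induced length space, while lower semicontinuity of Hausdorff measure preserves the volume estimate. At each $p_m$ the cusp of $f_\infty$ creates a genuine Lipschitz cone vertex on the bigraph, so the (unique) tangent cone at $p_m$ is a non-degenerate Euclidean cone with positive angular defect, whose cross-section has diameter strictly less than $\pi$; hence $p_m$ is sharp in the sense of \cite{dePNZ}, and density follows from density of $\{p_m\}$. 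I expect the principal obstacle to be balancing the two competing demands in Step 2: the cone angle at each $p_m$ must stay uniformly away from flat so that sharpness survives the limit, while the biLipschitz constant $C_j$ must remain independent of $k$; the choice of spike height equal to radius together with the single universal mollifier profile $\eta_k$ is what keeps both in check.
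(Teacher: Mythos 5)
Your overall strategy is the same as the paper's: add conical bumps at a dense set of points, read $\Sec>-1$ off the Gauss equation via strict convexity in the Beltrami--Klein model, use the hyperbolic Busemann--Feller projection for the volume bound, and invoke Buyalo for the Alexandrov limit. However, the key technical step contains a genuine gap.

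The specification of your reference profile $\eta_k:\overline{B_1^{\R^n}(0)}\to[0,\infty)$ as ``smooth, concave, flat near $\partial B_1$'' is self-contradictory: a concave function that vanishes (or is constant) on an open neighbourhood of $\partial B_1$ must be affine along any chord crossing that neighbourhood, hence constant, so a nontrivial smooth concave function cannot have compact support in $\overline{B_1}$. If you drop global concavity (as the paper does for its profile $g$, requiring strict concavity only on $B_{1/2}$), a second problem appears: your scaling $\psi_{m,k}(\mathbf x)=r_m\,\eta_k((\mathbf x-p_m)/r_m)$ ties the bump \emph{height} to the \emph{radius} $r_m$. Then $\nabla^2\psi_{m,k}=r_m^{-1}\,\nabla^2\eta_k(\cdot/r_m)$, so the positive eigenvalue of $\nabla^2\eta_k$ that must occur near $\partial B_1$ (where the profile transitions from a downward cone to zero) is amplified by $1/r_m$. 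For small $r_m$ this overwhelms the fixed strict concavity of $f$, and $f_k=f+\sum\psi_{m,k}$ fails to be concave, so the bigraph is not convex and the Gauss-equation argument for $\Sec_{\sigma_{j,k}}>-1$ breaks down.

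The fix is exactly what the paper implements: give each bump an \emph{independent} small height parameter $\eta_m$, namely $\psi_{m,k}=\eta_m\,\eta_k((\cdot-p_m)/r_m)$, and choose $\eta_m\ll r_m^2$ so the non-concave tail contribution $\eta_m/r_m^2\,\nabla^2\eta_k$ is dominated by $-\nabla^2 f$. The conical singularity at $p_m$ is still sharp for any $\eta_m>0$ (the gradient jump of size $2\eta_m/r_m$ suffices to give a cross-section of diameter $<\pi$), and the uniform Lipschitz bound in \eqref{eq_bilipschitz} can be obtained either from the explicit gradient bound $|\nabla\psi_{m,k}|\lesssim\eta_m/r_m$ (which is now \emph{small}, not bounded by $2$), or, more robustly, from the interior Lipschitz estimate for uniformly bounded concave functions as the paper does. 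Your remaining steps (disjointness of the balls, diagonalization, Busemann--Feller, GH compactness via bi-Lipschitz control, density of sharp points) are correct once this is repaired. One minor point worth noting: the paper's construction adds the bumps \emph{recursively} and only requires each new support to avoid the \emph{centers} of previous bumps (not the full supports), whereas you insist on pairwise disjoint balls; this is a feasible but slightly more restrictive choice that requires a careful enumeration to still get density of $\{p_m\}$.
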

	
\begin{proof}
Define 
	\[
	D_0 = \emptyset, \qquad D_j = \overline{B_{2-\sqrt{3}}^{\R^{n}}(0)} \setminus \overline{B^{\R^n}_{\frac 1{j+8}}(0)},
	\]
so $U_j = \tilde{f}(D_j) \cup \widetilde{(-f)}(D_j)$ is the bigraph of $f$ over $D_j$. Note that $f$ satisfies $f(\mathbf x) < 1 -  \sqrt 3 |\mathbf x|$, since the graph of this latter function coincides with $K$ on $\{z \ge 0\}$. Let $\{\mathbf{y}_m\} \in D_j \setminus \overline{D_{j-1}}$ be a dense sequence. We claim that 
\begin{quote}{\it
	there exists a sequence of smooth strictly concave functions $f_{j,k}:D\to \R$, $k\geq 1$, such that
\begin{enumerate}
	\item[(i)] $f(\mathbf x)\le f_{j,k}(\mathbf x) < 1- \sqrt 3 |\mathbf x|$ on $D$;
	\item[(ii)] $f_{j,k}$ converges uniformly, as $k \to \infty$, to a concave function $f_{j,\infty}$, and the graph of $f_{j,\infty}$ has sharp conical singularities at any $f_{j,\infty}(\mathbf y_m)$;
	\item[(iii)] $\{{\bf x} : f_{j,k}({\bf x}) \neq f(\bf x)\}$ is compactly contained in $D_j \setminus \overline{D_{j-1}}$. 
\end{enumerate}}
\end{quote}
Given the claim, let $(M_{j,k},\sigma_{j,k})$ be the bigraph of $f_{j,k}$ with the induced metric. Property (ii) implies the Hausdorff convergence of $M_{j,k}$ to the bigraph $M_{j,\infty}$ of $f_{j,\infty}$ with the induced intrinsic metric $\di_{j,\infty}$ and it is known that the concavity of $f_{j,k}$ guarantees the pointed Gromov-Hausdorff convergence $(M_{j,k},\di_{j,k},o_k) \to (M_{j,\infty}, \di_{j,\infty},o_\infty)$, with $o_k$ being the image of any fixed point in $D_1$. Using again the concavity of $f_{j,k}$, Gauss' equation implies that $M_{j,k}$ has sectional curvature bounded from below by $-1$, and $(M,\di_{j,\infty})$ is an Alexandrov space of curvature lower bounded by $-1$ by Buyalo's theorem, \cite{buyalo}. Next, for $0 \le k \le \infty$, identify $M$ with $M_{j,k}$ topologically via the map $\tilde f_{j,k} \circ \tilde f^{-1}$, and still denote with $\sigma_{j,k}$ the pulled-back metric on $M$. Note that $\{g_{j,k} \neq g_0\}$ is compactly contained in $U_j \setminus \overline{U_{j-1}}$. The uniform convergence together with the concavity of $f_{j,k}$ on $D$ guarantee that $\{f_{j,k}\}_k$ are uniformly Lipschitz on $D_j$, hence on the entire $D$ by (iii). In particular, up to identifying the manifolds by means of $\tilde f_{j,k} \circ \tilde f^{-1}$, \eqref{eq_bilipschitz} holds. To conclude, for a given $k : \NN_{>0} \to \NN$ we consider the concave function $f_\infty$ that equals $f_{j,k(j)}$ on $D_j \setminus D_{j-1}$. By the above construction, the bigraph $(M, g_\infty)$ of $f_\infty$ is the boundary of a convex set in $(\mathbb{B}_1, h)$ contained in $K$, so by the hyperbolic Busemann-Feller theorem the nearest point projection from $K$ to $(M,g_\infty)$ is distance decreasing. In particular, for every Borel set $S \subset M$ it holds $\vol_{g_\infty}(S) \le \vol_{i^\star h}(K)$, proving \eqref{proprie_basic}. 

It remains to prove the claim. In \cite{MV} it is presented a general procedure to construct a sequence of metrics on a bounded set of a Riemannian manifold which  Gromov-Hausdorff converges to an Alexandrov space with a sharp conical singularity at each point of a countable set. For completeness, we reproduce here the construction in our setting. Consider $g : \R^n \to \R$ such that
\begin{equation*}
\begin{cases}
g({\bf x}) = 1- |\mathbf x| - |\mathbf x|^2  & \ \ \text{for } \, \mathbf x \in B_{1/2}^{\mathbb{R}^{n}}\\
g \in C^\infty(B_1^{\mathbb{R}^{n}} \setminus \lbrace 0\rbrace) \\
\operatorname{supp} g \subseteq B_1^{\mathbb{R}^{n}}\\
g \ge 0.
\end{cases}
\end{equation*}
Then, for $\varepsilon> 0$ and $\mathbf y \in \R^n$ we define $g_{\varepsilon, \mathbf y }: \R^n \to \R$ as
\begin{equation*}
g_{\varepsilon, \mathbf y }(x) \doteq g\left(\frac{\mathbf x-\mathbf y}{\varepsilon} \right),
\end{equation*}
so that $g_{\varepsilon, \mathbf y }$ is smooth outside $\mathbf y$, non-positive and strictly concave on $B_{\varepsilon/2}^{\mathbb{R}^{n}}(\mathbf y)$.
Let 
	\[
	0<\varepsilon_1 < \operatorname{dist}_{\R^n}(\mathbf y_1,\partial (D_j \setminus D_{j-1}))
	\]
and define
\begin{equation*}
\phi_1(\mathbf x) \doteq f(\mathbf x) + \eta_1 g_{\varepsilon_1, \mathbf y_1}(\mathbf x),
\end{equation*}
with $\eta_1>0$ small enough so that $\phi_1$ is strictly concave and $\phi_1(\mathbf x)<1-\sqrt 3 \mathbf x$ on $D$.
Observe also that $\phi_1$ is smooth on $D \setminus \lbrace \mathbf y_1 \rbrace$ and its graph has a sharp singular point at $\phi_{1}(\mathbf  y_1)$. 

Recursively, let $0<\varepsilon_k < \operatorname{dist}_{\R^n}(\mathbf y_k, \partial (D_j \setminus D_{j-1}) \cup \{ {\bf y}_1, \ldots, {\bf y}_{k-1}\})$ and define
\begin{equation}
\label{eq: f_k^j}
\phi_k(\mathbf x) \doteq \phi_{k-1}(\mathbf x) + \eta_k g_{\varepsilon_k,\mathbf  y_k}(\mathbf x).
\end{equation}
The function $\phi_k$ is smooth on $D \setminus \lbrace \mathbf y_1, \ldots, \mathbf y_k \rbrace$, strictly concave and satisfies $\phi_k(\mathbf x)>\sqrt 3 \mathbf x-1$ provided that $\eta_k$ is small enough. Moreover, the graph of $\phi_k$ has sharp singularities at $\phi_k(\mathbf y_1), \ldots, \phi_k(\mathbf y_k)$.
Furthermore, if $\eta_k$ are such that $\sum_k \eta_k$ converges, then $\phi_k$ converges uniformly to some $\phi_\infty=:f_{j,\infty}$ whose graph is convex, has sharp singularities at $\lbrace \tilde{\phi}_{\infty}(\mathbf y_m)\rbrace_{m = 1}^\infty$, coincides with the graph of $f$ outside of $D_j \setminus D_{j-1}$ and is contained in the double cone $K$. The sharpness of the singularity at each $\tilde{\phi}_{\infty}(\mathbf y_m)$ can be directly checked, making use of the fact that points of an Alexandrov space have a unique tangent cone.

To define the smooth functions $f_{j,k}:D\to\mathbb{R}$ approximating $f_{j,\infty}$, recall that $f_{j,\infty}= f+\sum_{k=1}^\infty \eta_k g_{\varepsilon_k,\mathbf y_k}$. By a diagonal argument, it is enough to show that each $g_{\varepsilon_k,\mathbf y_k}$ can be uniformly approximated by smooth functions which coincide with $g_{\varepsilon_k,\mathbf y_k}$ outside $B_{\varepsilon_k/2}^{\mathbb{R}^{n}}(\mathbf y_k)$.
For $0 < \delta < \varepsilon_k/2$, let $g_{\varepsilon_k,\mathbf y_k,\delta}$ be a smooth function that is  strictly concave on $B_{\varepsilon_k/2}^{\mathbb{R}^{n}}(\mathbf{y}_k)$ and coincides with $g_{\varepsilon_k,\mathbf y_k}$ outside of $B_\delta^{\mathbb{R}^{n}}(\mathbf{y}_k)$, see for instance \cite[Theorem 2.1]{ghomi}. As $\delta\to 0$, we have that $g_{\varepsilon_k,\mathbf y_k,\delta}\to g_{\varepsilon_k,\mathbf y_k}$ uniformly. This concludes the proof.
\end{proof}

Let $E_+,E_-$ be the two connected components of $M \setminus U_1$, respectively contained in $\{z>0\}$ and in $\{z < 0\}$, and for each $j$ define 
	\begin{equation}\label{epmj}
	E_{-,j} \doteq E_- \setminus U_j, \qquad E_{+,j} \doteq E_+ \setminus U_j,
	\end{equation}
The metric $g$ on the block $M$ will be constructed from the original metric $g_0$ by prescribing, for each $i \ge 1$, a spike metric $\sigma_{i,k(i)}$ with $k(i)$ approximated spikes on $U_i \setminus U_{i-1}$. The function $k : \NN_{>0} \to \NN$ will be chosen inductively, by identifying, for each $j \ge 1$, $k(j)$ depending on $k(1), \ldots, k(j-1)$. Correspondingly, to each $j$ we shall associate a smooth metric $g_j$ on $M$ that corresponds to the choices of $\sigma_{i,k(i)}$ on $U_i \setminus U_{i-1}$ for $1 \le i \le j$. In particular, $g_j = g_{j-1}$ outside of $U_j \setminus U_{j-1}$. In the following lemma we summarize the properties of the metrics $g_j$ to be proved.
\begin{lemma}\label{l: construction}
There exists a sequence of metrics $\{g_j\}_{j=1}^\infty$ on $M$ with the following properties:
	\begin{align}
	& \big\{ x : \ g_j(x) \neq g_{j-1}(x) \big\} \ \text{ is compactly contained in } \, U_j \setminus \overline{U_{j-1}}, \label{p1} \tag{$\PP 1$}\\[0.1cm]
	& \Sec_{g_j} \ge -1, \label{p2} \tag{$\PP 2$} \\[0.1cm]
	& \forall \, S \subset M \ \text{Borel}, \qquad  \vol_{g_j}(S) \le \vol_{i^\star h}(K)<+\infty, \qquad   \label{p3} \tag{$\PP 3$} \\[0.1cm]
	& \exists \, \bar C_j>1 \ \text{such that} \quad \bar C_j^{-1} \di_{g_0}(x,y) \le \di_{g_j}(x,y) \le \bar C_j \di_{g_0}(x,y) \qquad \forall x,y \in M. \label{p4} \tag{$\PP 4$} 
	\end{align}   
where $K$ is the double cone defined in \eqref{def_K}, and $\di_{g_j}$ is the distance induced by $g_j$. Furthermore, having defined $E_{\pm,j}$ as in \eqref{epmj}, $g_j$ and $g_{j+1}$ satisfy 
	\begin{equation}\label{p5}\tag{$\PP 5$}
	\forall\, \varphi \in C^\infty(M), \  \ \begin{array}{ll} 
	\varphi \le -1 + 2^{-j} & \text{on } \, \partial E_{-,j} \\[0.2cm]
	\varphi \ge 1 - 2^{-j} & \text{on } \, \partial E_{+,j}
	\end{array}
	\quad \Longrightarrow \quad \|\varphi\|_{W^{2,p}(U_{j+1} \setminus \vm, g_{j+1})} > 1.
	\end{equation}
\end{lemma}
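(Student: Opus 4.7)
The construction is inductive on $j$. Starting with the given hyperbolic-induced metric $g_0$, I choose $k(j) \in \NN$ recursively and set $g_j := \sigma_{j, k(j)}$ on $U_j \setminus U_{j-1}$ and $g_j := g_{j-1}$ elsewhere. The only freedom in the construction is the choice of $k(j)$, which I use to enforce $(\PP 5)$. Properties $(\PP 1)$--$(\PP 4)$ at level $j$ follow directly from Lemma \ref{l: approximation} and the induction hypothesis on $g_{j-1}$: the support localization, the sectional curvature bound $\Sec \ge -1$, the uniform upper volume bound $\vol_{g_j}(S) \le \vol_{i^\star h}(K)$, and the bilipschitz equivalence with constants $\bar{C}_j := C_j \cdot \bar{C}_{j-1}$ all pass through the gluing since the modifications take place on disjoint annuli.

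The crucial step is the choice of $k(j+1)$ ensuring $(\PP 5)$. I argue by contradiction: assume that for every $k \in \NN$ there exists $\varphi_k \in C^\infty(M)$ with $\varphi_k \le -1+2^{-j}$ on $\partial E_{-,j}$, $\varphi_k \ge 1-2^{-j}$ on $\partial E_{+,j}$, and $\|\varphi_k\|_{W^{2,p}(U_{j+1} \setminus \vm, g_{j+1,k})} \le 1$, where $g_{j+1,k}$ denotes the candidate metric using spike parameter $k$ on $U_{j+1}\setminus U_j$. Since $p > n$, a uniform Sobolev--Morrey embedding on manifolds with $\Sec \ge -1$ and uniformly bounded volume provides $C^{0,\alpha}$-control of $\varphi_k$ in $g_{j+1,k}$; the uniform bilipschitz equivalence with $g_0$ of Lemma \ref{l: approximation} upgrades this to equicontinuity with respect to $g_0$, and Arzel\`a--Ascoli yields a subsequential uniform limit $\varphi_\infty$ on $\overline{U_{j+1}} \setminus \vm$ satisfying the same boundary inequalities.

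The metrics $g_{j+1,k}$ converge in the Gromov--Hausdorff sense to an Alexandrov metric $g_{j+1,\infty}$ on $M$ with $\Sec \ge -1$ and a dense set of sharp singular points inside $U_{j+1} \setminus U_j$. Lower semicontinuity of the Sobolev energy under GH convergence places $\varphi_\infty$ in the $W^{2,p}$-space of this Alexandrov space and guarantees local Lipschitz continuity. Invoking the vanishing theorem at sharp singular points (in the spirit of \cite[Theorem 1.2]{dePNZ}), the weak gradient of $\varphi_\infty$ vanishes at each such point; density together with Lipschitz continuity then forces $\varphi_\infty$ to be constant on each of the two connected components of $U_{j+1} \setminus U_j$, with values $c_- \le -1 + 2^{-j}$ on the lower component and $c_+ \ge 1 - 2^{-j}$ on the upper component.

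To conclude, the oscillation $c_+ - c_- \ge 2 - 2^{1-j} > 1$ must be converted into an $L^p$-norm lower bound on $\varphi_\infty$ exceeding $1$. I would combine two ingredients: (i) the $L^p$-mass on each of the two constant annular regions is at least $\tfrac{1}{2} \cdot \vol_{g_{j+1,\infty}}(U_{j+1}\setminus U_j)^{1/p}$; (ii) a Sobolev--Morrey trace estimate on the fixed smooth piece $(U_j \setminus \vm, g_j)$ bounds the oscillation of $\varphi_\infty$ across this middle region by a constant times $\|\varphi_\infty\|_{W^{2,p}(U_j \setminus \vm, g_j)}$. By tuning the spike amplitudes $\eta_k$ in the proof of Lemma \ref{l: approximation} across all earlier levels, one can arrange the limit volume of $U_{j+1}\setminus U_j$ to be a definite fraction of $\vol_{g_0}(U_{j+1}\setminus U_j)$ and keep the Sobolev constant on $(U_j\setminus\vm, g_j)$ strictly under control, making (i) and (ii) jointly incompatible with $\|\varphi_\infty\|_{W^{2,p}}\le 1$. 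The main technical hurdle is precisely this simultaneous quantitative tuning, which couples the choice of $k(j+1)$ back to all previous choices $k(1),\ldots, k(j)$ and requires combining the De Philippis--N\'u\~nez-Zimbr\'on vanishing at sharp points with sharp control of the Sobolev--Morrey constants at every level of the induction.
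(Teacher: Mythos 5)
The first half of your argument tracks the paper's strategy closely: the inductive choice of $k(j)$, the contradiction hypothesis with a sequence $\varphi_k$ bounded in $W^{2,p}$ with respect to the varying metrics, the uniform Morrey/H\"older estimates using $\Sec\ge-1$, $p>n$, and the uniform bilipschitz bounds, the Arzel\`a--Ascoli extraction of a limit, and the invocation of the De Philippis--N\'u\~nez-Zimbr\'on vanishing of $|\nabla\varphi|$ at sharp points to force the limit to be locally constant on the annulus of dense singularities. (Two bookkeeping remarks: in the paper the dense sharp set lives on $U_j\setminus U_{j-1}$, not $U_{j+1}\setminus U_j$, and the paper passes to the $W^{1,2}$ framework via Honda's stability theorems before applying the decay estimate of \cite{dePNZ}, rather than the informal ``lower semicontinuity of $W^{2,p}$ under GH convergence'' you invoke.)

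The genuine gap is in your final paragraph, where you try to close the contradiction. You propose to convert the oscillation of $\varphi_\infty$ into an $L^p$-norm lower bound through a quantitative combination of volume lower bounds and Sobolev--Morrey trace constants on the intermediate region, with an auxiliary re-tuning of the spike amplitudes $\eta_k$ at all earlier levels. This is a different mechanism from the paper's, and as stated it does not go through: since the total volume is bounded above by $\vol_{i^\star h}(K)$ and the spike construction can only decrease volume, you cannot force $\int_{U_{j+1}\setminus U_j}|\varphi_\infty|^p$ or the corresponding gradient integral to exceed $1$ by such tuning. The paper instead closes the loop structurally: it formulates a strengthened version $(\PP 5'_{j})$ that already quantifies over all spike counts at level $j+1$; then, at step $j$, local constancy of $\varphi_j$ on the two components of $U_j\setminus U_{j-1}$ together with the boundary values $\varphi_j\ge1$, $\varphi_j\le-1$ on $\partial E_{\pm,j}$ propagates these boundary conditions inward to $\partial E_{\pm,j-1}$, so that for large $k$ the function $\varphi_{j,k}$ falls under the scope of the already-established $(\PP 5'_{j-1})$ and therefore has $W^{2,p}$-norm $>1$ on $U_j\setminus\vm$, contradicting the assumed bound. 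No quantitative volume or trace estimate is needed, and the choices $k(1),\dots,k(j-1)$ feed into $k(j)$ only through $(\PP 5'_{j-1})$, not through a delicate simultaneous tuning. You should replace the last paragraph with this inward-propagation induction; without it, the proof does not close. Note also the need to treat $j=1$ separately, where $U_1\setminus\vm$ is connected, so constancy of the limit immediately contradicts the opposite-sign boundary values.
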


\begin{remark}\label{rem_consistency}
\emph{About \eqref{p5}, we shall see below that $g_j$ matches the following stronger property: whenever $\varphi$ satisfies the assumptions of \eqref{p5}, the inequality
	\[
	\|\varphi\|_{W^{2,p}(U_{j+1} \setminus \vm, \bar g)}
> 1
	\]
will hold for \emph{any} choice of $\bar g$ that coincides with $g_j$ on $U_j$ and with a spike metric $\sigma_{j+1,m}$ on $U_{j+1} \setminus U_j$. In particular, \eqref{p5} does not require to have already chosen the integer $k(j+1)$, but holds a-posteriori for every possible choice of it.
}
\end{remark}

\subsection{Proof of Theorem \ref{Counterex}}

Let us see how Lemma \ref{l: construction} allows to conclude the proof of Theorem \ref{Counterex}.
\medskip

Let $g$ be the smooth Riemannian metric on $M$ defined by $g=g_j$ on $U_j$ for $j\geq 0$. It is readily seen by \eqref{p2},\eqref{p3} that $\Sec_{g}\ge -1$ and that $\vol_{g}(M) \le \vol_{i^\star h}(K)<\infty$. Furthermore, referring to the proof of Lemma \ref{l: approximation}, $(M,g)$ can be realized as the bigraph of a concave function that equals $f_{j,k(j)}$ on $D_j \setminus D_{j-1}$. Such a bigraph is properly embedded in $(\mathbb{B}_1,h)$, hence $(M,g)$ is complete. Let us glue $N$ to $M$ along $\vm'$ and $\vm$, by keeping the metric $g$ unchanged outside of $\vm$. For convenience, still denote with $\vm$ the complement of $M \setminus \vm$ inside of $M \sharp N$, and with $g$ the glued metric. Fix a smooth function $F: M \sharp N \to\R$ such that 
	\[
	F \equiv 0 \ \ \text{ on } \, \vm, \qquad F\equiv -1 \ \ \text{ on } \, E_{-,1}, \qquad F\equiv 1 \ \ \text{ on } \, E_{+,1}.  
	\]
Since $(M,g)$ has finite volume, it is clear that $F\in W^{k,p}(M \sharp N)$ for every $k,p$. For each $p>n$, we prove that $F$ cannot be approximated by compactly supported smooth functions in $W^{2,p}(M\sharp N)$, as the statement for higher $k$ is a simple consequence. Suppose by contradiction that there exists a sequence $\{F_i\}_{i=0}^\infty\subset C^\infty_c(M \sharp N)$ such that $\|F-F_i\|_{W^{2,p}(M \sharp N,g)}\to 0$ as $i\to \infty$. In particular, there exists $i$ such that 
\[
\|F-F_i\|_{W^{2,p}(M \setminus \vm,g)}\le 1/2.
\]
Choose $j \ge 1$ so that $F_i$ has support in $U_j$. Then $F-F_i\equiv -1$ on $E_{-,j}$ and $F-F_i\equiv 1$ on  $E_{+,j}$, hence \eqref{p5} enables us to conclude that 
	\[
	\|F-F_i\|_{W^{2,p}(U_{j+1}\setminus \vm,g_{j+1})}> 1.
	\]
However, since $F-F_i$ is constant outside of $U_j$ and $g = g_{j+1}$ on $U_{j+1}$, 
\begin{equation}\label{f: norm lower bound}
\begin{array}{lcl}
\disp \frac{1}{2} & \ge & \disp \|F-F_i\|_{W^{2,p}(M \setminus \vm,g)} \ge \|F-F_i\|_{W^{2,p}(U_{j+1}\setminus \vm,g_{j+1})} > 1,
\end{array}
\end{equation}
contradiction.

\subsection{Proof of Lemma \ref{l: construction}}
	
Suppose that $g_{j-1}$ is constructed. Let $\left\{g_{j,k}\right\}_{k\in\mathbb{N}\cup\{0\}}$ be the sequence of smooth metrics on $M$ being equal to $g_{j-1}$ outside of $U_j \setminus U_{j-1}$ and equal to the spike metric $\sigma_{j,k}$ on $U_j \setminus U_{j-1}$. Then, $g_{j,0} \equiv g_{j-1}$ on $M$ and, denoting with $\di_{j,k}$ the distance induced by $g_{j,k}$, from Lemma \ref{l: approximation} we easily deduce the following properties: 
	\begin{align*}
	& \big\{x \ : \ g_{j,k}(x) \neq g_{j-1}(x)\big\} \quad \text{ is compactly contained in  } \, U_j \setminus U_{j-1}, \\[0.1cm]
	& \Sec_{g_{j,k}} \ge -1 \qquad \text{for each } \, k, \\[0.1cm]
	& \forall \, S \subset M \ \text{Borel,} \qquad \vol_{g_{j,k}}(S) \le \vol_{i^\star h}(K)<\infty.
	\end{align*}   
For each choice of $k(j)$, the metric $g_{j} \doteq g_{j,k(j)}$ therefore satisfies \eqref{p1},\eqref{p2},\eqref{p3}. To prove \eqref{p4} and \eqref{p5}, for any fixed $k,m \in \mathbb{N}$ we define the smooth metric $g_{j,k,m}$ such that
	\[
	g_{j,k,m} = \sigma_{j+1,m} \quad \text{on } \, U_{j+1}\setminus U_j, \qquad g_{j,k,m} = g_{j,k} \quad \text{otherwise}.
	\]
The construction of $g_{j,k,m}$ and (iii) in Lemma \ref{l: approximation} guarantee that there exists a constant $\bar C_j > 1$ such that
	\begin{equation}\label{bili_2}
	\bar C_j^{-1} \di_{g_{j-1}}(x,y) \le \di_{g_{j,k,m}}(x,y) \le \bar C_j \di_{g_{j-1}}(x,y) \qquad \forall x,y \in M, \ k,m \in \NN \cup \{0\}.
	\end{equation}
In particular, independently of the possible choice of $k(j)$, $g_j$ also satisfies \eqref{p4}. Observe that \eqref{bili_2} implies that
	\begin{equation}\label{eq_monotonicity}
	\exists \ \nu_j > 0 \ \text{ such that } \qquad \vol_{g_{j,k,m}}\big( B^{g_{j,k,m}}_1(z) \big) \ge \nu_j \quad \forall \, z \in U_j, \ k,m \in \NN.
	\end{equation}	
As anticipated in Remark \ref{rem_consistency}, we shall prove the following strengthened version of \eqref{p5}:

\begin{quote}
\textbf{Claim 1:} there exists $k(j)$ depending on $j$ such that $g_{j} \doteq g_{j,k(j)}$ satisfies  
\begin{equation}\label{p5'}\tag{$\PP 5'_j$}
\forall\, \varphi \in C^\infty(M), \  \ \begin{array}{ll} 
\varphi \le -1 + 2^{-j} & \text{on } \, \partial E_{-,j} \\[0.2cm]
\varphi \ge 1 - 2^{-j} & \text{on } \, \partial E_{+,j}
\end{array}
\quad \Longrightarrow \quad  \forall\,m,\ \|\varphi\|_{W^{2,p}(U_{j+1} \setminus \vm, g_{j, k(j),m})} > 1.
\end{equation}
\end{quote}

Assume, by contradiction, that \eqref{p5'} does not hold, so that, for $k$ large enough, there exists a sequence $\{\varphi_{j,k}\}$ with $\varphi_{j,k} \in C^\infty(M,g_{j,k})$, and a sequence of integers $\{m_k\}$, such that
	\begin{equation}\label{proprie_contrad}
	\begin{array}{ll} 
	\varphi_{j,k} \le -1 + 2^{-j} & \text{on } \, \partial E_{-,j} \\[0.2cm]
	\varphi_{j,k} \ge 1 - 2^{-j} & \text{on } \, \partial E_{+,j}
	\end{array}
	\qquad \text{but} \qquad  \|\varphi_{j,k}\|_{W^{2,p}(U_{j+1} \setminus \vm,g_{j,k,m_k})} \le 1.	
	\end{equation}
We examine the convergence of the sequence $\left\{\varphi_{j,k}\right\}_k$ on $\overline{U_j} \setminus \overline{\vm}$. 

\begin{quote}
\textbf{Claim 2:} {\it as $k \to \infty$, the sequence $\varphi_{j,k}$ converges locally uniformly on $\overline{U_j} \setminus \overline{\vm}$ to a function $\varphi_j$ that is locally H\"older continuous on $\overline{U_j} \setminus \overline{\vm}$ and locally constant on $U_{j} \setminus U_{j-1}$ (on $U_1 \setminus \vm$, if $j=1$). 
} 
\end{quote} 
We describe how Claim 2 yields to the proof of Claim 1. First, since the convergence is uniform up to the boundary of $U_j$, passing to the limit we obtain 	
	\begin{equation}\label{boundary_varphi}
	\varphi_j \ge 1 \ \ \text{ on } \, \partial E_{+,j}, \qquad \varphi_j \le -1 \ \ \text{ on } \, \partial E_{-,j}. 
	\end{equation}
The argument goes then by induction on $j$. If $j=1$, $U_1 \setminus \vm$ is connected and thus $\varphi_1$ is constant. This contradicts the fact that $\partial E_{+,1} \cup \partial E_{-,1} \subset \partial (U_1 \setminus \vm)$. Having proved Claim 1 for $j=1$, and thus having constructed $g_1$ with property $(\PP 5'_1)$, we examine the case $j>1$. We proceed inductively, that is, we assume to have constructed $g_{j-1}$ in such a way that \eqref{p1},$\ldots$, $(\PP 5'_{j-1})$ hold. If $j>1$, then $U_{j}\setminus U_{j-1}$ has at least two connected components, respectively contained in $E_{+}$ and $E_{-}$. The constancy of $\varphi_j$ on each component, coupled with \eqref{boundary_varphi}, guarantees that 
   \[
\varphi_j \ge 1 \ \ \text{ on } \, \partial E_{+,j-1}, \qquad \varphi_j \le -1 \ \ \text{ on } \, \partial E_{-,j-1}.
	\]
Therefore, for $k$ large enough, 
   \[
\varphi_{j,k} \ge 1 - 2^{-j+1} \ \ \text{ on } \, \partial E_{+,j-1}, \qquad \varphi_{j,k} \le -1 + 2^{-j+1} \ \ \text{ on } \, E_{-,j-1},
	\]
and thus, by $(\PP 5'_{j-1})$, 
	\[
	\|\varphi_{j,k}\|_{W^{2,p}(U_{j}\setminus \vm,g_{j,k})} > 1.
	\]
Concluding, since $g_{j,k,m_k} = g_{j,k}$ on $U_{j}$,
	\[
	1 \ge \|\varphi_{j,k}\|_{W^{2,p}(U_{j+1}\setminus \vm,g_{j,k,m_k})} \ge \|\varphi_{j,k}\|_{W^{2,p}(U_{j}\setminus \vm,g_{j,k})} > 1, 
	\]
contradicting \eqref{proprie_contrad}.\\[0.2cm]	
It remains to prove Claim 2. The argument is inspired by the recent \cite{dePNZ}, where the authors study the behaviour of harmonic functions near sharp points of $\RCD(K,n)$ spaces. Recall that, given a complete metric $\bar g$ on $M$ with $\mathrm{Ric}_{\bar{g}} \ge -(n-1)$, and a geodesic ball $B_R(o)$ centered at some fixed origin $o$, there exist constants $C_{\dou}, C_\dou'$ depending on $n,R$ such that
	\begin{equation}\label{eq_doubling}
	\vol_{\bar{g}}\big(B_{2r}(z)\big) \le C_\dou \vol_{\bar{g}}\big(B_{r}(z)\big) \qquad \forall \, B_{2r}(z) \subset B_R(o)
	\end{equation}
and, for every $0 < r < s$ such that $B_s(z) \subset B_R(o)$,  
	\begin{equation}\label{eq_reversedoubling_0}
	\frac{\vol_{\bar{g}} \big(B_r(z)\big)}{\vol_{\bar{g}}\big(B_s(z)\big)} \ge \frac{V_{-1}(r)}{V_{-1}(s)} \ge C_\dou' \left(\frac{r}{s}\right)^n,
	\end{equation}	
where $V_{-1}(t)$ is the volume of a ball of radius $t$ in the $n$-dimensional hyperbolic space of curvature $-1$. It is a simple consequence of the above two inequalities that there exists $C_\dou'' = C_\dou''(n,R)$ such that
	\begin{equation}\label{eq_reversedoubling}
	\text{for each } \, B'_r \subset B_s \ \text{ geodesic balls in } \, B_R(o), \qquad \frac{\vol_{\bar{g}}(B'_r)}{\vol_{\bar{g}}(B_s)} \ge C_\dou''(n,R) \left(\frac{r}{s}\right)^n,  
	\end{equation}
where now $B'_r,B_s$ may not be concentric. On the other hand, Buser's isoperimetric inequality \cite{buser} (see \cite[Th. 5.6.5]{saloff} or \cite[Thm. 1.4.1]{koreschoen} for alternative proofs) guarantees the existence, for each $p \in [1,\infty)$, of a constant $\Po_p= \Po_p(n,p,R)$ such that
\begin{equation}\label{buser}
\begin{array}{lcl}
\disp \left\{ \fint_{B_r(x)} |\psi - \bar \psi_{B_r(x)}|^p \right\}^{\frac{1}{p}} & \le & \disp r \Po_p \left\{\fint_{B_r(x)} |\nabla \psi|^p \right\}^{\frac{1}{p}} \qquad \forall \, \psi \in \lip(B_{R}(o)),
\end{array}
\end{equation}
where $\bar \psi_{B_r(x)}$ is the mean value of $\psi$ on $B_r(x)$. 

Because of Lemma \ref{l: approximation}, up to subsequences $(M, g_{j,k,m_k},o) \to M_{j,\infty} \doteq (M, \di_{j,\infty},o)$ as $k \to 0$ in the Gromov-Hausdorff sense, where $M_\infty$ is an Alexandrov space of curvature not smaller than $-1$ with a dense set of sharp points in $U_j \setminus U_{j-1}$. Fix a smooth open set $U_0'$ with $\vm\Subset U_{0}'\Subset U_{1}$, and such that $U_1 \setminus \overline{U_0'}$ is connected. Choose
	\[
	0 < \eps_j \le \frac{1}{1000 \bar C_j^2} \min \Big\{\di_{g_{j-1}}(U_j, \partial U_{j+1}),\di_{g_{j-1}}(U_0',\vm)\Big\}>0 
	\]
in such a way that the tubular neighborhood 
	\[
	V_j \doteq B_{16 \bar C_j\eps_j}^{g_{j-1}}(U_j \setminus U_0') \qquad \text{has smooth boundary}.
	\]
Hereafter the index $j$ will be fixed, so for notational convenience we omit to write it unless it identifies the sets $U_j$. We also use a superscript or subscript $k$ to indicate quantities that refer to the metric $g_{j,k,m_k}$, so for instance we write $|\cdot |_k$, $\vol_k$ to denote the norm and volume, and $B_r^k(z)$ instead of $B_r^{g_{j,k,m_k}}(z)$. Analogously, balls in $M_{j,\infty}$ will be denoted with $B_r^\infty(z)$. By \eqref{bili_2}, we have the following inclusions between tubular neighbourhoods:
	\begin{equation}\label{eq_bonita_bolle}
	B^k_{\eps_j}(U_j \setminus U_0') \Subset V_j \Subset B_{5\eps_j}^k(V_j) \Subset U_{j+1}\setminus \vm \qquad \forall \, k \in \NN.
	\end{equation}
Again using \eqref{bili_2}, we can fix $R_j > 0$ such that	
	\[
	U_{j+1} \Subset B^k_{R_j/2 -1}(o) \qquad \forall \, k \in \NN.
	\] 
Because $\Sec_{g_{j,k,m_k}} \ge -1$ for each $j,k$, on the balls $B^{k}_{R_j}(o)$ we have the validity of  \eqref{eq_doubling}, \eqref{eq_reversedoubling} and \eqref{buser} with constants only depending on $n,p,R_j$. By using \eqref{eq_bonita_bolle}, we can apply Morrey's estimates as stated in \cite[Thm. 9.2.14]{hkst} both to $\varphi_k$ and to $|\nabla \varphi_k|_k$, to deduce that for fixed $j$ there exists a constant $C = C(n,p,R_j)$ such that for each $z \in B_{\eps_j}^k(V_j)$ it holds
	\begin{equation}\label{eq_morrey_0}
	\disp \sup_{x,y \in B^k_{\eps_j}(z)} \frac{|\varphi_k(x) - \varphi_k(y)|}{\di_k(x,y)^{1- \frac{n}{p}}} + \frac{|\nabla \varphi_k(x)|_k - |\nabla \varphi_k(y)|_k}{\di_k(x,y)^{1- \frac{n}{p}}} \le C(n,p,R_j) \eps_j^{\frac{n}{p}} \left( \fint_{B^k_{4\eps_j}(z)} |\nabla \varphi_k|^p_k +  |\nabla^2 \varphi_k|^p_k \right)^\frac{1}{p}
	\end{equation}
Using \eqref{eq_reversedoubling}, \eqref{eq_monotonicity} and \eqref{proprie_contrad}, we get 
	\[
\eps_j^{\frac{n}{p}} \left( \fint_{B^k_{4\eps_j}(z)} |\nabla \varphi_k|^p_k +  |\nabla^2 \varphi_k|^p_k \right)^\frac{1}{p} \le C \eps_j^{\frac{n}{p}} \left( \frac{1}{\eps_j^n \vol_k\big(B^k_{1}(z)\big)} \int_{B^k_{4\eps_j}(z)} |\nabla \varphi_k|^p_k +  |\nabla^2 \varphi_k|^p_k \right)^\frac{1}{p} \le C'.
	\]
Thus \eqref{eq_morrey_0} gives
	\begin{equation}\label{eq_morrey}
	\disp \sup_{x,y \in B^k_{\eps_j}(z)} \frac{|\varphi_k(x) - \varphi_k(y)|}{\di_k(x,y)^{1- \frac{n}{p}}} + \frac{|\nabla \varphi_k(x)|_k - |\nabla \varphi_k(y)|_k}{\di_k(x,y)^{1- \frac{n}{p}}} \le C''(n,p,R_j) \qquad \forall \, z \in B_{\eps_j}^k(V_j).
	\end{equation}
A simple chain argument using \eqref{eq_bilipschitz} then allows to extend the uniform H\"older estimates in \eqref{eq_morrey} to $x,y \in B_{\eps_j}^k(U_j \setminus U_0')$. Briefly, since $V_j$ has smooth boundary we can fix a constant $\hat C_j$ such that, for each $x,y \in V_j$, there exists a curve $\gamma_{xy} \subset V_j$ joining $x$ to $y$ whose length is at most $\hat{C}_j \di_{g_{j-1}}(x,y)$. Restricting to $x,y \in B_{\eps_j}^k(U_j \setminus U_0')$, choose points $\{x_i\}_{i =1}^s$ along $\gamma_{xy}$ in such a way that $x_0 = x$, $x_s = y$ and the length of each subsegment $\gamma_{x_ix_{i+1}}$ with respect to $g_{j-1}$ does not exceed  $\eps_j/(2\hat C_j\bar C_j)$. By \eqref{bili_2}, there exists $\tilde C_j$ such that
	\[
	x_{i} \in B_{\eps_j}^k(x_{i-1}) \quad \forall \, i \in I, \ k \in \NN, \qquad \sum_{i} \di_k(x_i,x_{i+1}) \le \tilde C_j \di_k(x,y).
	\]
Applying \eqref{eq_morrey} with $z = y = x_i$ and $x = x_{i+1}$, and summing up, we get 
	\begin{equation}\label{eq_morrey_stronger}
	|\varphi_k(x) - \varphi_k(y)| + \Big| |\nabla \varphi_k(x)|_k - |\nabla \varphi_k(y)|_k\Big| \le C'''(n,p,R_j) \di_k(x,y)^{1- \frac{n}{p}} \qquad \forall \, x,y \in B_{\eps_j}^k(U_j \setminus U_0').
	\end{equation}

Next, by \eqref{proprie_contrad} and since $M \setminus U_0'$ is connected while $M \setminus \overline{U_j}$ is not, each curve in $M\setminus U_0'$ joining two points $x \in \partial E_{-,j}$, $y \in \partial E_{+,j}$ shall contain a point $x_k \in U_j \setminus U_0'$ for which $\varphi_k(x_k) =0$. 
Hence, $\{\varphi_k\}$ is equibounded on $B_{\eps_j}^k(U_j \setminus U_0')$ and subconverges, by Ascoli-Arzel\'a theorem, pointwise to some $\varphi : B_{\eps_j}^\infty(U_j \setminus U_0') \to \R$ that, because of \eqref{eq_morrey_stronger}, is uniformly continuous on $B_{\eps_j}^\infty(U_j \setminus U_0')$. Furthermore, by \cite[Prop. 3.19]{honda_crelle} and up to subsequences, $\varphi_k \to \varphi$ $L^2$-weakly on each ball $B_{\eps_j}^k(z) \subset B_{\eps_j}^k(U_j \setminus U_0')$, see also \cite[Rem. 3.8]{honda_crelle}.  By H\"older inequality, \eqref{eq_bonita_bolle} and since $(M, g_{j,k,m_k})$ has uniformly bounded volume,
	\[
	\limsup_k \|\varphi_k\|_{W^{1,2}(B_{\eps_j}^k(z), g_{j,k,m_k})} < \infty, 
	\]
and $\varphi_k \to \varphi$ $L^2$-strongly on $B_{\eps_j}^\infty(z)$. By \cite[Thm. 1.3]{honda_crelle}, $\varphi \in W^{1,2}(B_r^\infty(z), \di_\infty)$ for each $r < \eps_j$, $\varphi$ is in the domain of the Laplacian $\mathcal{D}^2(\Delta, B_{\eps_j}^\infty(z))$ on $M_{j,\infty}$ and  
	\begin{equation}\label{eq_convergence}
	\begin{array}{ll}
	\Delta \varphi_k \to \Delta \varphi & \quad \text{$L^2$ weakly on } \, B_{\eps_j}^{\infty}(z) \\[0.3cm]
	\nabla \varphi_k \to \nabla \varphi & \quad \text{$L^2$ strongly on } \, B_r^{\infty}(z), \ \text{ for each } \, r< \eps_j.
	\end{array}
	\end{equation}
In particular, by \cite[Thm. 3.28]{honda_crelle}, $|\nabla \varphi_k| \to |\nabla \varphi|$ $L^2$ strongly on $B_r^\infty(z)$, hence pointwise a.e by \cite[Prop. 3.32]{honda_crelle}. Passing to the limit in \eqref{eq_morrey_stronger}, $\varphi$ and $|\nabla \varphi|$ are uniformly continuous on $\overline{U_j \setminus  U_0}$. If $z$ is a sharp point we apply \cite[Proposition 2.5]{dePNZ} to infer the existence of $\delta_0 = \delta_0(n,z)$ and $\eps'_j = \eps'(n,z,\eps_j) \in (0,\eps_j)$ such that 
	\begin{equation}\label{eq_dephilzim}
	\fint_{B_{r/2}^\infty(z)} |\nabla \varphi|^2 \le (1-\delta_0) \fint_{B_{r}^\infty(z)} |\nabla \varphi|^2
	+ r^2 C(n,z,\eps_j) \fint_{B_r^\infty(z)} (\Delta \varphi)^2 \qquad \forall \, r \le \eps_j'.
	\end{equation}
Using \cite[Thm. 3.29]{honda_crelle} and \eqref{eq_convergence} we deduce that, for every $r \le \eps'_j$,
	\[
	\|\Delta \varphi\|_{L^2(B_r^\infty(z))}  \le \liminf_k \|\Delta \varphi_k\|_{L^2(B_r^k(z))},
	\]  	
hence by H\"older inequality and \eqref{eq_reversedoubling_0} we deduce
	\[
	\begin{array}{lcl}
	\disp r^2 \fint_{B_r^k(z)} |\Delta \varphi_k|^2 & \le & \disp r^2 \vol_k \big(B_r^k(z)\big)^{\frac{-2}{p}} \left(\int_{B_r^k(z)} |\Delta \varphi_k|^p\right)^{\frac{2}{p}} \\[0.5cm]
	& \le & r^2 \vol_k \big(B_r^k(z)\big)^{-\frac{2}{p}} \le (C_\dou')^{-\frac{2}{p}} r^2 \left(\frac{\eps_j}{r}\right)^{\frac{2n}{p}} \vol_k \big(B_{\eps_j}^k(z)\big)^{-\frac{2}{p}} \\[0.3cm]
	& \le & C(n,p,R_j,\eps_j,\nu_j) r^{2 \frac{p-n}{p}}			
	\end{array}
	\]	
where, in the last step, we used again \eqref{eq_monotonicity}. Inserting into \eqref{eq_dephilzim} we eventually obtain
	\[
	\fint_{B_{r/2}^\infty(z)} |\nabla \varphi|^2 \le (1-\delta_0) \fint_{B_r^\infty(z)} |\nabla \varphi|^2 + C(n,p,R_j,\nu_j,z,\eps_j) r^{\frac{2(p-n)}{p}} \qquad \forall \, r \le \eps_j'.
	\]
Consequently, 
	\[
	\lim_{r\to 0} \fint_{B_{r/2}^\infty(z)} |\nabla \varphi|^2 = 0 \qquad \text{for every sharp point $z$.}
	\]
From the uniform continuity of $|\nabla \varphi|$ and the density of the set of sharp points in $U_j \setminus U_{j-1}$, we conclude that $|\nabla \varphi| = 0$ on $\overline{U_j\setminus U_{j-1}}$ (on $U_1 \setminus \vm$, if $j=1$), as claimed. This concludes the proof of Lemma \ref{l: construction}.

\subsection{Proof of Theorem \ref{th_striking}, and Corollaries \ref{cor_gionaludo} and \ref{cor_dePNZ}}\label{striking}

All of them are based on the following simple observation: let $X,Y$ be Riemannian manifolds, with $Y$ compact, and consider a (say, smooth) function $\varphi \in W^{2,p}(X\times Y)$. For every $y \in Y$ fixed, define $\varphi_y : X \to \R$ by $\varphi_y(x) = \varphi(x,y)$. Denote with $\nabla, \bar \nabla$, $\Delta$, $\bar \Delta$, the Levi-Civita connections and the Laplace operator of $X$ and $X \times Y$ respectively. From 
	\[
	|\bar \nabla \varphi(x,y)| \ge |\nabla \varphi_y(x)|, \qquad |\bar \nabla^2 \varphi(x,y)| \ge |\nabla^2 \varphi_y(x)|,
	\]
it holds
	\[
	\|\varphi\|^p_{L^p(X\times Y)} + \|\bar \nabla \varphi\|^p_{L^p(X\times Y)} + \|\bar \nabla^2\varphi\|^p_{L^p(X\times Y)} \ge \int_Y \left\{ \|\varphi_y\|^p_{L^p(X)} + \|\nabla \varphi_y\|^p_{L^p(X)} + \|\nabla^2\varphi_y\|^p_{L^p(X)} \right\} \di y, 
	\]
with equality if $\varphi$ just depends on $y$. Hence, by the definition of $W^{2,p}$ norm, there exists a constant $C_p>0$ only depending on $p$ such that
	\begin{equation}\label{eq_bound_prod}
	\begin{array}{ll}
	\disp \|\varphi\|^p_{W^{2,p}(X\times Y)} \ge C_p \int_Y \|\varphi_y\|^p_{W^{2,p}(X)} \di y & \quad \forall \, \varphi \in C^\infty(X \times Y) \cap W^{2,p}(X\times Y).
	\end{array}
	\end{equation}
Conversely, let $\pi:X\times Y \to X$ be the projection onto the first factor and for any $\psi \in W^{2,p}(X)$ define $\bar \psi \doteq \psi \circ \pi \in W^{2,p}(X\times Y)$. Then 
\[
\begin{array}{ll}
\|\bar \psi\|^p_{L^p(X\times Y)} = \vol(Y)\|\psi\|^p_{L^p(X)}, &\|\bar\nabla\bar \psi\|^p_{L^p(X\times Y)} = \vol(Y)\|\nabla \psi\|^p_{L^p(X)},\\[0.2cm] \|\bar\nabla^2\bar \psi\|^p_{L^p(X\times Y)} = \vol(Y)\|\nabla^2 \psi\|^p_{L^p(X)}, &\|\bar\Delta\bar \psi\|^p_{L^p(X\times Y)} = \vol(Y)\|\Delta \psi\|^p_{L^p(X)}.
\end{array}
\] 
Regarding Theorem\ref{th_striking}, for fixed $n\ge 2$, and $p>2$, consider a surface $M\sharp N$ and the smooth function $F \in W^{k,p}(M\sharp N)$ (for each $k \in \NN$) constructed in Theorem \ref{Counterex} for dimension $2$. In particular, 
	\[
	\|v - F\|_{W^{2,p}(M\sharp N)} \ge 1 \qquad \text{for every } \, v \in C^\infty_c(M\sharp N).
	\]
Consider a compact, boundaryless manifold $Y$ of dimension $n-2$, let $\pi : Q = (M \sharp N) \times Y \to Y$ be the projection onto the second factor, and define $\bar F \doteq F \circ \pi \in W^{2,p}(Q)$. Then, from \eqref{eq_bound_prod}, for every $u \in C^\infty_c(Q)$ it holds 
	\[
	\|u - \bar F\|^p_{W^{k,p}(Q)}	\ge \|u - \bar F\|^p_{W^{2,p}(Q)} \ge C_p \int_Y \|u_y - F\|_{W^{2,p}(M\sharp N)}^p \di y \ge C_p \vol(Y).
	\]
Hence $F\not\in W^{k,p}_0(M)$ and	
	\[
	W_0^{k,p}(Q) \neq W^{k,p}(Q),
	\]
as claimed.\\[0.2cm]
	
As for Corollary \ref{cor_gionaludo}, given $p>2$, let $(M^2,g)$ be a complete surface with $\Sec \ge 0$ constructed in \cite{MV}, so that there exists a sequence $\{F_k\} \subset C^\infty_c(M)$ with $\|F_k\|_{L^p(M)} + \|\Delta F_k\|_{L^p(M)} = 1$ but $\|\nabla^2 F_k\|_{L^p(M)} \to \infty$. Fix a compact manifold $Y^{n-2}$ with $\Sec \ge 0$, and define as above $\bar F_k=F_k\circ \pi\in C^\infty_c(M\times X)$. It is immediate to deduce that
\[
\|\bar F_k\|_{L^p(M\times X)} + \|\bar \Delta \bar F_k\|_{L^p(M\times X)} = \vol(Y)^{1/p},\qquad \text{but}\qquad \|\bar\nabla^2 \bar F_k\|_{L^p(M)} \to \infty.
\]\\[0.2cm]
Corollary \ref{cor_dePNZ} can be proved in a very similar way, starting from a sequence of compact $2$-dimensional positively curved manifolds $M_k$ and a sequence of functions $F_k\in C^\infty(M_k)$ which verify
\[
\|F_k\|_{L^p(M_k)} + \|\Delta F_k\|_{L^p(M_k)} = \vol(Y)^{- \frac{1}{p}},\qquad\text{but}\qquad \|\nabla^2 F_k\|_{L^p(M_k)} \to \infty;
\]
the existence of these sequences is guaranteed by \cite{dePNZ}.

\begin{acknowledgement*}
M.R. and G.V. are members of INdAM-GNAMPA. We acknowledge that the present research has been partially supported by PRIN project 2017 ``Real and Complex Manifolds: Topology, Geometry and holomorphic dynamics'' and by MIUR grant Dipartimenti di Eccellenza 2018-2022 (E11G18000350001), DISMA, Politecnico di Torino. S.H. acknowledges the support of the Grant-in-Aid for Scientific Research (B) of 20H01799 and the Grant-in-Aid for Scientific Research (B) of 18H01118. We also would like to thank Li Chen and Willie WY Wong for pointing out some literature and the anonymous referee for the careful reading of the paper and for useful suggestions.
\end{acknowledgement*}

\bibliographystyle{amsplain}
\bibliography{CutOffs}
\end{document}